\newtheorem{theorem}{Theorem}[section]
\newtheorem{corollary}[theorem]{Corollary}
\newtheorem{lemma}[theorem]{Lemma}
\theoremstyle{definition}
\newtheorem{remark}[theorem]{Remark}
\def \leq {\leqslant}
\def \geq {\geqslant}
\def \leq {\leqslant}
\def \geq {\geqslant}
\def \le {\leqslant}
\def \ge {\geqslant}
\def \mod{\pmod}
\let\oldproofname=\proofname
\renewcommand{\proofname}{\rm\bf{\oldproofname}}
\title{On decomposition thresholds for odd-length cycles and other tripartite graphs}
\date{}
\author{Darryn Bryant\thanks{School of Mathematics and Physics, The University of Queensland, St Lucia QLD 4067, Australia.}\qquad Peter Dukes\thanks{Department of Mathematics and Statistics, University of Victoria, Canada}\qquad Daniel Horsley\thanks{School of Mathematics, Monash University, Melbourne VIC 3800, Australia.} \\ \vspace*{-2mm} Barbara Maenhaut$^*$\qquad Richard Montgomery\thanks{Mathematics Institute, University of Warwick, Coventry, CV4 7AL, UK}}
\begin{document}
\maketitle
\setstretch{1.2}

\begin{abstract}
An (edge) \emph{decomposition} of a graph $G$ is a set of subgraphs of $G$ whose edge sets partition the edge set of $G$. Here we show, for each odd $\ell \geq 5$, that any graph $G$ of sufficiently large order $n$
with minimum degree at least $\bigl(\frac{1}{2}+\frac{1}{2\ell-4}+o(1)\bigr)n$ has a decomposition into $\ell$-cycles if and only if $\ell$ divides $|E(G)|$ and each vertex of $G$ has even degree. This threshold cannot be improved beyond $\frac{1}{2}+\frac{1}{2\ell-2}$. It was previously shown that the thresholds approach $\frac{1}{2}$ as $\ell$ becomes large, but our thresholds do so significantly  more rapidly. Our methods can be applied to tripartite graphs more generally and we also obtain some bounds for decomposition thresholds of other tripartite graphs.
\end{abstract}

\section{Introduction}\label{S:intro}

When we refer to a graph in this paper we always mean a simple undirected graph. Let $F$ and $G$ be graphs. A \emph{decomposition} of $G$ is a set of subgraphs of $G$ whose edge sets partition the edge set of $G$. If each graph in such a decomposition is isomorphic to a graph $F$, we say it is an \emph{$F$-decomposition.} Let $\gcd(G)$ denote the largest integer that divides the degree of each vertex of $G$. We say that $G$ is \emph{$F$-divisible} if $|E(F)|$ divides $|E(G)|$ and $\gcd(F)$ divides $\gcd(G)$. The \emph{$F$-decomposition threshold} $\delta_F$ is the infimum of all nonnegative real numbers $\delta$ with the property that, for each sufficiently large integer $n$, every $F$-divisible $n$-vertex graph with minimum degree at least $\delta n$ has an $F$-decomposition. For each integer $\ell \geq 3$, let $C_\ell$ represent the cycle of length $\ell$. Barber, K\"{u}hn, Lo and Osthus \cite{BarKuhLoOst} showed that $\delta_{C_4} = \frac{2}{3}$ and $\delta_{C_\ell} = \frac{1}{2}$ for all even $\ell \geq 6$. Taylor \cite{Tay} obtained exact, rather than asymptotic, versions of these results for all even $\ell \neq 6$. For odd-length cycles only bounds on the decomposition threshold are known. It is not difficult to show that $\delta_{C_\ell} \geq \frac{1}{2}+\frac{1}{2\ell-2}$ for each odd $\ell \geq 3$. This can be seen by considering an $n$-vertex regular graph that contains a balanced complete bipartite subgraph and has valency just under $(\frac{1}{2}+\frac{1}{2\ell-2})n$ (see Lemma~\ref{L:lowerBound} for details). The best known upper bound for $\ell=3$ is due to Delcourt and Postle \cite{DelPos} who showed that $\delta_{C_3} \leq (7+\sqrt{21})/14 \lessapprox 0.82733$. By a result of \cite{GloKuhLoMonOst}, this upper bound applies for all odd-length cycles. A special case of a result of Joos and K\"{u}hn \cite{JooKuh} on decompositions of hypergraphs into tight cycles shows that $\delta_{C_\ell} \leq \frac{1}{2}+O(\ell^{-1/8!})$ for each odd $\ell \geq 3$ and hence that $\delta_{C_\ell}$ approaches $\frac{1}{2}$ as $\ell$ becomes large. Here we show that $\delta_{C_\ell} \leq \frac{1}{2}+\frac{1}{2\ell-4}$ for each odd $\ell \geq 5$.

\begin{theorem}\label{T:main}
Let $\ell \geq 5$ be an odd integer and let $\epsilon>0$ be a real number. There is an integer $n_0 \coloneqq n_0(\ell,\epsilon)$ such that every $C_\ell$-divisible graph of order $n \geq n_0$ with minimum degree at least $(\frac{1}{2}+\frac{1}{2\ell-4}+\epsilon)n$ has a $C_\ell$-decomposition.
\end{theorem}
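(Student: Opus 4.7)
The plan is to apply the iterative absorption framework of Barber, K\"uhn, Lo and Osthus \cite{BarKuhLoOst}, with the decisive new input being a fractional $C_\ell$-decomposition result that exploits the tripartite structure of odd cycles. Recall that this framework reduces the theorem to two essentially independent tasks: (i) producing an \emph{approximate} $C_\ell$-decomposition at the given density, covering all but $o(n^2)$ edges; and (ii) constructing an \emph{absorber} $A\subseteq G$ of $o(n^2)$ edges such that, for every $C_\ell$-divisible leftover graph $L$ of size $o(n^2)$ on $V(G)$, the union $A\cup L$ admits a $C_\ell$-decomposition. Together these imply the theorem: apply (i) to $G\setminus E(A)$ and absorb the residue using (ii).

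For task (i), I would begin by choosing a balanced bipartition $V(G)=V_1\cup V_2$. The minimum-degree hypothesis gives that each internal graph $G[V_i]$ has minimum degree at least $\bigl(\tfrac{1}{2\ell-4}+\tfrac{\epsilon}{2}\bigr)n$, while $G[V_1,V_2]$ is almost complete bipartite. Because $\ell$ is odd, every copy of $C_\ell$ in $G$ has an even (positive) number of edges crossing $(V_1,V_2)$, and distinct crossing-types $(i,j,k)$ with $i+j+k=\ell$ and $k$ even use internal and bipartite edges in different proportions. I would set up a fractional $C_\ell$-decomposition as a weighted sum of $C_\ell$-copies over several crossing-types, tuned so that every edge of $G$ is used with total weight $1$. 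The key observation is that internal minimum-degree $\tfrac{1}{2\ell-4}$ is just enough for such a fractional solution to exist with weights bounded away from $0$; a R\"odl-nibble rounding then converts it into a near-decomposition.

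For task (ii), I would construct the absorber $A$ probabilistically by sampling random $C_\ell$-copies in $G$, following the now-standard absorption toolkit developed in \cite{BarKuhLoOst} and refined by subsequent authors. The minimum-degree condition ensures that each potential leftover edge lies in many $C_\ell$-copies within $A$, giving enough flexibility for $A$ to absorb any $C_\ell$-divisible $L$ of size $o(n^2)$; divisibility is preserved throughout the argument because $\gcd(C_\ell)=2$ divides $\gcd(G)$ and the approximate decomposition from (i) removes $2$-regular chunks.

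The principal obstacle is task (i). The extremal construction from Lemma~\ref{L:lowerBound} saturates the weaker bound $\tfrac12+\tfrac{1}{2\ell-2}$, so reaching the stronger $\tfrac12+\tfrac{1}{2\ell-4}$ must use $C_\ell$-copies of \emph{multiple} distinct crossing-types simultaneously, redistributing load to avoid the bottleneck that any single type creates. Writing down the corresponding linear programme, exhibiting a positive solution with the required weight lower bounds, and verifying the nibble hypotheses is the technical heart of the proof; it should also be the mechanism by which the method extends to the broader class of tripartite graphs promised in the abstract.
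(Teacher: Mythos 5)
Your high-level framing --- approximate decomposition plus iterative absorption --- matches the paper, which simply invokes the BKLO equality $\delta_{C_\ell}=\delta^{0+}_{C_\ell}$ (Theorem~\ref{T:approxToIntegral}) rather than reconstructing absorbers. The genuine gap lies entirely in your task (i), which you correctly flag as the crux and then leave unproved; moreover, the mechanism you sketch is not the one the paper uses, and there is no evidence it reaches the threshold $\frac{1}{2}+\frac{1}{2\ell-4}$.

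The paper never fixes a bipartition of $V(G)$ and never produces a fractional $C_\ell$-decomposition at all. The two ideas you are missing are (a) fractional decomposition into the \emph{weighted triangle} $T_{\ell-2,1,1}$ and (b) the notion of a \emph{condensation}. For (a), Theorem~\ref{T:weightedTriDecomp} shows that minimum degree $(\frac12+\frac{1}{2\ell-4})n$ suffices for a fractional $T_{\ell-2,1,1}$-decomposition via a completely local argument: assign each edge $e$ of each triangle $K$ of $G$ the weight $1/t_e$ (reciprocal codegree), which trivially gives a fractional decomposition of $G$ into weighted triangles; the degree condition then forces the edge-weight ratios of each such weighted triangle to be mild enough that it fractionally decomposes into scaled copies of $T_{\ell-2,1,1}$, by a Hardy--Littlewood--P\'olya/Birkhoff majorization argument (Theorem~\ref{T:weightedTriIntoWeightedTri}). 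For (b), $T_{\ell-2,1,1}$ is precisely the condensation of $C_\ell$ with respect to the tripartition into parts of sizes $\frac{\ell-1}{2},\frac{\ell-1}{2},1$, and Theorem~\ref{T:approximateCycleDecomp} (proved via Szemer\'edi regularity, Haxell--R\"odl, and the blow-up Lemma~\ref{L:blowUpDecomp}) shows that a fractional decomposition into \emph{any condensation} of $F$ already yields an $\eta$-approximate $F$-decomposition. This is the decisive simplification: one need only solve a small LP over weighted triangles, not a global LP over $C_\ell$-copies stratified by crossing type across a fixed bipartition. Your plan is an untested alternative, and without a concrete reason that mixing crossing-types saturates exactly at $\frac{1}{2\ell-4}$, it remains a description of the problem rather than a proof.
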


While $\frac{1}{2}+\frac{1}{2\ell-4}$ is very unlikely to be best possible, as we noted above, it cannot be improved beyond $\frac{1}{2}+\frac{1}{2\ell-2}$. We prove Theorem~\ref{T:main} through combining the seminal results in \cite{BarKuhLoOst} with two subsidiary results of our own.

We first require some definitions and notation. A \emph{weighted graph} $W$ consists of an \emph{underlying graph} $U$, together with an assignment $w_W$ of positive weights to the edges of $U$. We say a weighted graph $W'$ is a \emph{scaled copy} of $W$ if there is a real number $\alpha$ and an isomorphism $f$ from the underlying graph of $W$ to the underlying graph of $W'$ such that $w_{W'}(e)=\alpha\, w_W(f(e))$ for all $e \in E(W)$. For a set $\mathcal{W}$ of weighted graphs, a \emph{fractional $\mathcal{W}$-decomposition} of a graph $G$ is a finite collection $\mathcal{G^*}$ of scaled copies of weighted graphs in $\mathcal{W}$ such that the underlying graph of each weighted graph in $\mathcal{G}^*$ is a subgraph of $G$ and, for each $e \in E(G)$, the sum of the weights assigned to $e$ by weighted graphs in $\mathcal{G^*}$ is exactly 1. When $\mathcal{W}=\{W\}$ we will denote this as a \emph{fractional $W$-decomposition.}

For a given odd $\ell \geq 5$, our proof of Theorem~\ref{T:main} proceeds by first establishing that $n$-vertex graphs with minimum degree at least $(\frac{1}{2}+\frac{1}{2\ell-4})n$ have fractional decompositions into scaled copies of a certain weighted $K_3$, and then showing that this implies that $\delta_{C_\ell} \leq \frac{1}{2}+\frac{1}{2\ell-4}$. The following theorem supplies the first piece of this argument.

\begin{theorem}\label{T:weightedTriDecomp}
Let $T_{e_1,e_2,e_3}$ be the weighted $K_3$ in which the edges have weights $e_1 \ge e_2 \ge e_3$. Every graph of order $n$ with minimum degree at least $\delta(e_1,e_2,e_3)\,n$ has a fractional $(T_{e_1,e_2,e_3})$-decomposition, where
\begin{equation*}\label{E:deltaDef}
    \delta(e_1,e_2,e_3) \coloneqq \mfrac{1}{2}+\max\left\{\mfrac{e_3}{2e_1+2e_2-2e_3},\mfrac{e_2+e_3}{8e_1-2e_2-2e_3}\right\}.
\end{equation*}
\end{theorem}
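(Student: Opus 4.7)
The plan is to prove Theorem~\ref{T:weightedTriDecomp} via linear programming duality. A fractional $(T_{e_1,e_2,e_3})$-decomposition of $G$ is exactly a non-negative solution $(y_T^\sigma)$, indexed by triangles $T$ of $G$ and permutations $\sigma$ of $\{1,2,3\}$, of the edge-covering system in which the contributions sum to $1$ on each $f \in E(G)$. By Farkas' lemma, such a solution exists if and only if there is no edge-weighting $x \colon E(G) \to \R$ with $\sum_f x_f > 0$ satisfying $e_1 x_{\sigma(1)} + e_2 x_{\sigma(2)} + e_3 x_{\sigma(3)} \leq 0$ for every triangle $T$ of $G$ and every permutation $\sigma$ of its edges. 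Since $e_1 \geq e_2 \geq e_3 > 0$, the rearrangement inequality collapses the six constraints per triangle to the single binding inequality
\[
e_1 M_T + e_2 N_T + e_3 m_T \leq 0,
\]
where $M_T \geq N_T \geq m_T$ are the values of $x$ on the three edges of $T$ in decreasing order.

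I would assume for contradiction that such an $x$ exists and derive a bound on $\sum_f x_f$ that contradicts its positivity. The binding inequality immediately forbids a triangle with three non-negative $x$-values, so the positive-support subgraph $E^+ \coloneqq \{f : x_f > 0\}$ is triangle-free in $G$. The remaining triangles are then classified by how many of their edges lie in $E^+$: type-$(2,1)$ triangles give $e_1 M_T + e_2 N_T \leq e_3\,|m_T|$ and type-$(1,2)$ triangles give $e_1 M_T \leq e_2\,|N_T| + e_3\,|m_T|$, and these two families are the sources of the two terms in the definition of $\delta(e_1,e_2,e_3)$. Next I would take a non-negative linear combination of the triangle inequalities with multipliers $\lambda_{21}$ and $\lambda_{12}$ on the two types respectively and regroup the resulting double sum by edges. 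Since the minimum-degree hypothesis forces every edge to lie in at least $(2\delta(e_1,e_2,e_3)-1)n$ triangles, the regrouped expression becomes $\sum_f a(f)\, x_f \leq 0$, in which $a(f)$ is a non-negative combination of the counts of type-$(2,1)$ and type-$(1,2)$ triangles through $f$. Optimising $\lambda_{21}$ and $\lambda_{12}$ so that $a(f) > 0$ uniformly for all $f$ at the claimed threshold yields the desired contradiction.

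The main obstacle is that the binding triangle inequality is non-linear in $x$ because it depends on the sorted order of the three $x$-values on each triangle. I would handle this by discretising $x$ into level sets and reducing to the case $x_f \in \{1, -c\}$ for a single constant $c > 0$; then the triangle types depend only on the number of $E^+$-edges, and the extremal analysis reduces to a finite optimisation. The first term $\frac{e_3}{2e_1+2e_2-2e_3}$ in $\delta(e_1,e_2,e_3)$ should emerge from the type-$(2,1)$ regime, in which $E^+$ is forced close to the Tur\'an bound for triangle-free subgraphs of $G$, while the second term $\frac{e_2+e_3}{8e_1-2e_2-2e_3}$ should arise from the type-$(1,2)$ regime, in which positive edges are scarce and each one is double-counted in the regrouped inequality, producing the factor of $4$ in $8e_1$. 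The simultaneous need to control both extremal configurations is precisely what forces the threshold to be a maximum of two expressions, and calibrating the multipliers so that a single $a(f)$-positive regime covers both cases is the most delicate part of the calculation; the passage from the two-valued $x$ back to a general one should then follow by a routine limiting argument over refinements of the level-set discretisation.
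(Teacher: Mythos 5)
Your proposal takes a genuinely different route from the paper. The paper's proof is a direct primal construction: it first decomposes $G$ fractionally into the weighted triangles $W_K$ in which each edge $e$ of $K$ gets weight $1/t_e$ (so that summing over all triangles $K$ containing $e$ gives exactly weight $1$ on $e$), and then shows each $W_K$ fractionally decomposes into scaled copies of $T_{e_1,e_2,e_3}$ by a majorization argument (Theorem~\ref{T:weightedTriIntoWeightedTri}, via Hardy--Littlewood--P\'olya and Birkhoff); the minimum-degree bound is used only to control the ratios $t_e/t_{e'}$ on adjacent edges (Lemmas~\ref{L:tRatio} and \ref{L:tRatio2}). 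You instead try to rule out a Farkas certificate for the same LP. Your setup is correct, and your observation that the rearrangement inequality collapses the six assignment constraints per triangle to the sorted one, that $E^+$ must be triangle-free, and that the two terms of $\delta(e_1,e_2,e_3)$ should correspond to the constructions in Lemmas~\ref{L:lowerBound} and~\ref{L:lowerBound2} are all sound intuitions.

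However, as it stands the proposal has gaps that are not merely technical. First, the reduction to two-valued certificates $x_f\in\{1,-c\}$ is asserted to be a ``routine limiting argument,'' but it is neither routine nor obviously true: a vertex of the Farkas polytope is pinned down by tight sorted triangle constraints and there is no reason its coordinates take only two values, and the sorted structure of the constraints does not pass cleanly to level-set refinements. Second, the concluding step is logically incomplete: establishing $\sum_f a(f)\,x_f\le 0$ with $a(f)>0$ for all $f$ does not contradict $\sum_f x_f>0$ when $x$ has mixed signs. What is needed is that the coefficients on $E^+$ are uniformly \emph{larger} than those on $E^-$ (or $a(f)$ constant), and you have not shown that a choice of $\lambda_{21},\lambda_{12}$ achieves this at the threshold $\delta(e_1,e_2,e_3)$; in fact the coefficient an edge $f$ receives from a given triangle depends on where $x_f$ falls in that triangle's sorted order, not merely on its sign, so the type-$(2,1)$/type-$(1,2)$ classification alone does not determine $a(f)$. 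Third, you ignore edges with $x_f=0$ and triangles of type $(0,3)$, which contribute to $a(f)$. The paper avoids all of these difficulties by the two-step primal construction: the $1/t_e$ weighting makes the global decomposition trivial, and the whole difficulty is isolated into the clean per-triangle majorization condition \eqref{E:goodTest}, which converts directly into inequalities among the $t_e$ and then into the minimum-degree threshold. If you want to pursue the dual route, you would essentially need to exhibit explicit dual multipliers mirroring the paper's $1/t_e$ construction and the Birkhoff coefficients, at which point the primal proof is simpler.
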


The weighted $K_3$ that will be of interest for bounding $\delta_{C_\ell}$ is $T_{\ell-2,1,1}$. Theorem~\ref{T:weightedTriDecomp} implies that, for each integer $\ell \geq 3$, every graph of order $n$ with minimum degree at least $(\frac{1}{2}+\frac{1}{2\ell-4})n$ has a fractional $(T_{\ell-2,1,1})$-decomposition. There is no reason to think that the bound of Theorem~\ref{T:weightedTriDecomp} is best possible. However, we show in Lemmas~\ref{L:lowerBound} and \ref{L:lowerBound2} that it cannot be reduced below $\frac{1}{2}+\max\{\frac{e_3}{2e_1+2e_2},\frac{e_2+e_3}{8e_1+2e_2+2e_3}\}$.

For a positive real number $\eta$ and a graph $G$ with $n$ vertices, an \emph{$\eta$-approximate decomposition} of $G$ is a decomposition of a subgraph $G'$ of $G$ such that $|E(G')| \geq |E(G)| - \eta n^2$. We are able to use Theorem~\ref{T:weightedTriDecomp} to establish results concerning (integral) decompositions into odd-length cycles because a fractional $(T_{\ell-2,1,1})$-decomposition of a sufficiently large graph can be transformed into an approximate $C_{\ell}$-decomposition of the same host graph. This is a special case of a more general result that we state below as Theorem~\ref{T:approximateCycleDecomp}. For a positive integer $k$, a \emph{proper $k$-colouring} of a graph $F$ is a function $c:V(F) \rightarrow \{1,\ldots,k\}$ such that $c(x) \neq c(y)$ for each edge $xy$ of $F$. The sets $c^{-1}(i)$ for $i \in \{1,\ldots,k\}$ (some possibly empty) are the \emph{colour classes} of $c$. Let $F$ be a graph, let $c$ be a proper $k$-colouring of $F$ and, for all $2$-subsets $\{i,j\}$ of $\{1,\ldots,k\}$, let $e_{ij}$ be the number of edges of $F$ that are incident with both a vertex coloured $i$ and a vertex coloured $j$. The \emph{condensation} of $F$ with respect to $c$ is the weighted graph on vertex set $\{1,\ldots,k\}$ in which the edge $ij$ is absent if $e_{ij}=0$ and otherwise is present with weight $e_{ij}$.

\begin{theorem}\label{T:approximateCycleDecomp}
Let $F$ be a graph, let $\mathcal{W}$ be a set of condensations of $F$, and let $\eta>0$ be a real number. There is an integer $n_0 \coloneqq n_0(F,\eta)$ such that each graph $G$ of order at least $n_0$ that has a fractional $\mathcal{W}$-decomposition also has an $\eta$-approximate $F$-decomposition.
\end{theorem}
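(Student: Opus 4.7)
The plan is to proceed in two stages: first convert the given fractional $W$-decomposition of $G$ into a fractional $F$-decomposition of $G$ (on all but $o(n^2)$ edges), then invoke a standard nibble-type theorem (\`a la Haxell--R\"odl or Pippenger--Spencer) that transforms any fractional $F$-decomposition of a sufficiently large graph into an $\eta$-approximate integral $F$-decomposition.

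For the conversion stage, fix the partition $\{U_1, \ldots, U_k\}$ of $V(F)$ witnessing the condensation. Each scaled copy $W'$ of $W$ on an ordered $k$-tuple $(v_1, \ldots, v_k)$, with $v_i$ in the role of $U_i$ and scalar $\alpha_{W'}$, contributes weight $\alpha_{W'}|E_F(U_i, U_j)|$ to the edge $v_iv_j$ of $G$ for each $ij \in E(W)$. I would distribute the total weight $\alpha_{W'}|E(F)|$ attributed to $W'$ among embeddings $\phi: F \to G$ with $\phi^{-1}(v_i) \in U_i$ for every $i$. The per-embedding weights are chosen (via a small linear system indexed by the combinatorial types of extension, or equivalently by a symmetric random sampling of extensions with concentration) so that the contributions from $W'$ to each of its own edges $v_iv_j$ sum to $\alpha_{W'}|E_F(U_i, U_j)|$, while contributions to the remaining edges of $G$ are spread uniformly over extensions. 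Summing over all $W'$ then yields total weight exactly $1$ on each edge of $G$, mirroring the original fractional $W$-decomposition and giving a fractional $F$-decomposition.

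The main obstacle is ensuring that enough structured extensions actually exist in $G$: a priori the fractional $W$-decomposition imposes no local density, so a given $k$-tuple $(v_1, \ldots, v_k)$ may not extend to many copies of $F$, and the combinatorial balancing above could fail. To bypass this, I would precede the construction with a regularisation step: apply Szemer\'edi's regularity lemma to $G$, discard $O(\eta n^2)$ edges lying in irregular or sparse pairs, and replace the given fractional $W$-decomposition with one supported on $k$-tuples drawn from distinct clusters joined by regular pairs of non-negligible density (losing only $O(\eta n^2)$ edge coverage in the transport). The counting lemma then guarantees $\Theta(n^{|V(F)|-k})$ extensions of each such $k$-tuple to a copy of $F$, and the per-embedding balancing goes through uniformly. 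Tracking and combining the three sources of loss — regularity, the small fraction of scaled copies that must be discarded, and the final nibble — and keeping their sum below $\eta n^2$ is then routine bookkeeping, once the regularity parameter is chosen sufficiently small in terms of $\eta$ and $F$.
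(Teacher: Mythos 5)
Your overall shape — regularise $G$, convert the fractional $W$-decomposition to something $F$-flavoured, then nibble — matches the paper's strategy, and the second stage (Haxell--R\"odl type nibble) is indeed routine once the first stage is in hand. The genuine gap is in stage one, and it is not just bookkeeping. You propose to take each scaled copy $W'$ on a $k$-tuple $(v_1,\dots,v_k)$ and distribute its weight over embeddings $\phi\colon F\to G$ with $\phi^{-1}(v_i)\in U_i$, choosing per-embedding weights so that the triangle edges $v_iv_j$ receive exactly $\alpha_{W'}|E_F(U_i,U_j)|$ and the remaining edges are covered ``uniformly.'' You then assert that summing over all $W'$ yields weight exactly $1$ on every edge of $G$. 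This is the crux and it does not follow: the extension vertices for different $W'$'s overlap in an uncontrolled way, so the contributions to a fixed non-triangle edge from the many $W'$'s that reach it do not automatically total $1$. There is no visible mechanism that makes these contributions cancel correctly, and the ``small linear system indexed by combinatorial types'' is not shown to have a nonnegative solution, nor is it shown that the collective effect is a (near-)fractional $F$-decomposition of $G$.

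The paper resolves precisely this difficulty by \emph{not} trying to spread weight over extensions inside $G$. Instead it passes to the regularity reduced graph $R$, observes that the fractional $W$-decomposition of $G$ induces one of $R$, and then proves (Lemma~\ref{L:blowUpDecomp}) that the $q$-blow-up of $R$ has a fractional $F$-decomposition, where $q=\max_i|U_i|$. In the blow-up, every pair of parts is a complete uniformly weighted $K_{q,q}$, so the symmetry makes the ``spread uniformly over extensions'' calculation exact, with a closed-form per-copy weight $q^2/|A|$. The resulting fractional $F$-decomposition of the blow-up is then transferred back to $G$ by splitting each regular pair into regular sub-pairs of prescribed density (Lemma~\ref{L:colourRegularPair}) and applying Lemma~\ref{L:HaxRodFinalDecomp} to each resulting $(F,b,\geq\delta_0,\epsilon)$-graph; Lemma~\ref{L:HaxRodTauBounded} is also needed to lower-bound the per-copy weights so that the pieces are dense enough for the nibble. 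Your regularisation idea is compatible with this, but without the blow-up lemma (or something playing its role), the balancing step you rely on remains unproven, and I do not see how to fix it by working directly on $G$.
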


In this paper we will only apply Theorem~\ref{T:approximateCycleDecomp} in the case where $\mathcal{W}$ contains only one weighted graph, but we believe the more general formulation we prove might be useful in future work. For odd $\ell \geq 3$, we can consider a proper 3-colouring of $C_\ell$ in which two colour classes contain $\frac{\ell-1}{2}$ vertices and the third part contains only one vertex. So $T_{\ell-2,1,1}$ is indeed a condensation of $C_\ell$.

We are able to obtain results on decompositions, rather than simply on approximate decompositions, thanks to results in \cite{BarKuhLoOst}. For any $\eta>0$ we define $\delta^{\eta}_F$ to be the infimum of all nonnegative real numbers $\delta$ with the property that, for each sufficiently large integer $n$, every $n$-vertex graph with minimum degree at least $\delta n$ has an $\eta$-approximate $F$-decomposition. The \emph{approximate $F$-decomposition threshold} $\delta^{0+}_F$ is the supremum of $\delta^{\eta}_F$ over all positive real numbers $\eta$. Using `iterative absorption' techniques, the following is proved in \cite{BarKuhLoOst}.

\begin{theorem}[\cite{BarKuhLoOst}]\label{T:approxToIntegral}
For each odd integer $\ell \geq 3$, we have $\delta_{C_\ell}=\delta^{0+}_{C_\ell}$.
\end{theorem}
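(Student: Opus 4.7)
The plan is to establish the two inequalities $\delta^{0+}_{C_\ell}\le\delta_{C_\ell}$ and $\delta_{C_\ell}\le\delta^{0+}_{C_\ell}$ separately. The first is essentially immediate, while the second requires the iterative absorption machinery of \cite{BarKuhLoOst}.

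For $\delta^{0+}_{C_\ell}\le\delta_{C_\ell}$, fix $\delta>\delta_{C_\ell}$ and an arbitrary $n$-vertex graph $G$ with $\delta(G)\ge\delta n$, not necessarily $C_\ell$-divisible. For $n$ large I would remove a spanning subgraph $H\subseteq G$ with $|E(H)|=O(n)$ and $\Delta(H)$ bounded, chosen so that $G-H$ is $C_\ell$-divisible and still has minimum degree at least $\delta_{C_\ell}\,n$; standard divisibility-fixing lemmas accomplish this by correcting the residue of $|E(G)|$ modulo $\ell$ and the parities of degrees. An exact $C_\ell$-decomposition of $G-H$ is then an $\eta$-approximate $C_\ell$-decomposition of $G$ for any fixed $\eta>0$ once $n$ is large enough.

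The non-trivial direction $\delta_{C_\ell}\le\delta^{0+}_{C_\ell}$ proceeds by iterative absorption. Fix $\delta>\delta^{0+}_{C_\ell}$ and a $C_\ell$-divisible graph $G$ with $n$ vertices and $\delta(G)\ge\delta n$. Choose a random nested sequence $V(G)=U_0\supseteq U_1\supseteq\cdots\supseteq U_k$ (a \emph{vortex}) with $|U_{i+1}|/|U_i|$ a small constant and $k=O(\log n)$, so that with high probability each $G[U_i]$ inherits minimum degree close to $\delta\,|U_i|$. Inside $G[U_k]$ construct an \emph{absorber}: a subgraph $A$ such that for every $C_\ell$-divisible $L\subseteq G[U_k]$ that is edge-disjoint from $A$ and has sufficiently few edges, $A\cup L$ admits a $C_\ell$-decomposition. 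Then iterate a \emph{cover-down} step: for each $i$ from $0$ to $k-1$, produce a $C_\ell$-packing whose edges lie in $G[U_i]\setminus A$, cover every edge of $G[U_i]$ that is not contained in $U_{i+1}$, and leave the uncovered portion $C_\ell$-divisible inside $U_{i+1}$. Each cover-down step is supplied by the approximate decomposition threshold $\delta^{0+}_{C_\ell}$, applied to a suitable auxiliary graph, together with local divisibility adjustments. After $k$ levels the residual graph lies in $U_k$ with few enough edges to be absorbed by $A$, and concatenating all partial decompositions gives a $C_\ell$-decomposition of $G$.

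The principal obstacle is the construction of the absorber. One would assemble $A$ from many small flexible gadgets, each a $C_\ell$-decomposable graph admitting two different $C_\ell$-decompositions that differ in a controlled way; by randomly embedding enough such gadgets into $G[U_k]$ and arguing via a counting/supersaturation lemma that gadgets compatible with every possible short remainder $L$ are present, one shows $A$ absorbs any small divisible leftover. A secondary subtlety is ensuring that the reservations made for $A$ and for successive cover-down steps do not destroy the minimum-degree condition at deeper vortex levels; this is managed by taking the random vortex with small shrinkage ratio and appealing to concentration to preserve $\delta(G[U_i])\ge(\delta^{0+}_{C_\ell}+\Omega(1))|U_i|$ throughout, which is what allows the approximate threshold to be invoked at every level.
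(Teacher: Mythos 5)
The paper does not prove this statement at all: it is imported verbatim from Barber, K\"uhn, Lo and Osthus \cite{BarKuhLoOst}, with the text only recording that it is established there by iterative absorption. Your outline follows exactly that route, so there is no alternative approach to compare; the only question is whether your sketch constitutes a proof, and it does not. The easy direction $\delta^{0+}_{C_\ell}\le\delta_{C_\ell}$ is fine: deleting a bounded-degree subgraph with $O(n)$ edges (short paths joining paired odd-degree vertices, plus a few cycles to correct $|E(G)|$ modulo $\ell$) to restore divisibility while barely affecting the minimum degree is standard at these densities, and the removed edges are negligible against $\eta n^2$.

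The hard direction, however, is a roadmap rather than an argument. The two components that make up essentially all of \cite{BarKuhLoOst} are asserted, not established. First, the absorber: since the final vortex set has bounded size, one needs, for each of the boundedly many possible $C_\ell$-divisible leftovers $L$ there, a reserved graph $A_L$ such that both $A_L$ and $A_L\cup L$ are $C_\ell$-decomposable. Saying that such gadgets exist ``by a counting/supersaturation lemma'' skips the crux, which is a design problem: one must exhibit concrete transformer-type graphs admitting two prescribed $C_\ell$-decompositions whose difference realises an arbitrary divisible $L$, and then embed them while respecting all prior reservations and the divisibility bookkeeping (for odd $\ell$ the parity constraints make this genuinely delicate). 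Second, the cover-down step: invoking ``the approximate threshold applied to a suitable auxiliary graph together with local divisibility adjustments'' hides the real difficulty, namely that an $\eta$-approximate decomposition leaves an uncontrolled sparse remainder, and one must cover every edge at the current level not lying inside the next vortex set while leaving a remainder that is again divisible, concentrated in the next set, and compatible with the degree reservations --- a bespoke lemma in \cite{BarKuhLoOst}, not a direct application of $\delta^{0+}_{C_\ell}$. One should also note that this machinery must operate at density only $\epsilon$ above $\delta^{0+}_{C_\ell}$ (barely above $\tfrac12$ for even $\ell\ge 6$), which is part of what makes the cited proof long. So your proposal correctly identifies the strategy of the cited work but leaves its technical core as a gap.
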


Recall that $T_{\ell-2,1,1}$ is a condensation of $C_\ell$ for each odd $\ell \geq 3$. This means that, between them, Theorems~\ref{T:weightedTriDecomp} and \ref{T:approximateCycleDecomp} imply that $\delta^{0+}_{C_\ell} \leq \frac{1}{2}+\frac{1}{2\ell-4}$. Thus $\delta_{C_\ell} \leq \frac{1}{2}+\frac{1}{2\ell-4}$ by Theorem~\ref{T:approxToIntegral} and Theorem~\ref{T:main} immediately follows.

In Section~\ref{S:defs} we introduce some more definitions and notation and in Section~\ref{S:LBs} we prove some lower bounds on decomposition thresholds that imply the lower bounds we have already mentioned. Next, in Section~\ref{S:weightedTriDecomps}, we consider fractional decompositions of graphs into weighted triangles and prove Theorem~\ref{T:weightedTriDecomp}. Section~\ref{S:condensations} is devoted to proving Theorem~\ref{T:approximateCycleDecomp} using regularity lemma based arguments similar to those in \cite{HaxRod} and \cite{Yus2005}. We examine the implications of our results for decompositions into tripartite graphs more generally in Section~\ref{S:tripartite} before giving some concluding thoughts in Section~\ref{S:conclusion}.

\section{Definitions and notation}\label{S:defs}

Let $G$ be a graph. For a subset $A$ of $V(G)$ we denote by $G[A]$ the subgraph of $G$ induced by the set $A$. For disjoint subsets $A$ and $B$ of $V(G)$ we denote by $G[A,B]$ the bipartite subgraph of $G$ induced by the  bipartition $\{A,B\}$.

Recall that a \emph{weighted graph} $W$ consists of an \emph{underlying graph} $U$, together with an assignment $w_W$ of positive weights to the edges of $U$. Note that $U$ need not be complete and we do not allow edges of weight 0. We write $V(W)$ and $E(W)$ for $V(U)$ and $E(U)$. Unless otherwise stated we will assume the assignment of weights to the edges of a weighted graph $W$ is called $w_W$. We write $\Vert W \Vert$ for the total weight $\sum_{e \in E(W)}w_W(e)$ of $W$.

Let $W$ and $W'$ be weighted graphs. We say $W'$ is a \emph{weighted subgraph} of $W$ if $V(W') \subseteq V(W)$, $E(W') \subseteq E(W)$ and $w_{W'}(e) \leq w_W(e)$ for each $e \in E(W')$. We say $W$ and $W'$ are \emph{similar with scale factor $\alpha$} if there is an isomorphism $f$ from the underlying graph of $W$ to the underlying graph of $W'$ such that $w_{W'}(e)=\alpha\, w_W(f(e))$ for all $e \in E(W)$. We refer to $W'$ as a \emph{scaled copy} of $W$. If $\alpha=1$, the weighted graphs are \emph{congruent} and we refer to $W'$ as a \emph{copy} of $W$. If $f$ is the identity function, we say $W$ and $W'$ are \emph{superimposed}. A graph can, of course, be thought of as a weighted graph all of whose edges have weight 1 and we make no distinction between these objects.

A \emph{fractional packing} of a weighted graph $W$ is a finite collection $\mathcal{P}^*$ of weighted subgraphs of $W$ such that, for each $e \in E(W)$, we have $\sum_{W' \in \mathcal{P}^*(e)}w_{W'}(e) \leq w_W(e)$ where $\mathcal{P}^*(e)=\{W' \in \mathcal{P}^*: e \in E(W')\}$. If each weighted graph in $\mathcal{P}^*$ is a scaled copy of some fixed weighted graph $F$, then $\mathcal{P}^*$ is a  \emph{fractional $F$-packing}. The \emph{leftover} of $\mathcal{P}^*$ is the spanning weighted subgraph $L$ of $W$ such that, for each $e \in E(W)$, we have that $w_L(e)$ is $w_W(e)-\sum_{W' \in \mathcal{P}^*(e)}w_{W'}(e)$ if the latter is positive and $e \notin E(L)$ if it is 0. A fractional packing whose leftover contains no edges is a \emph{fractional decomposition}. Note that we allow fractional packings to contain superimposed scaled copies of a graph, but these can always be removed if desired by amalgamating the weights. We write $\Vert \mathcal{P}^* \Vert$ for $\sum_{W' \in \mathcal{P}^*}\Vert W' \Vert$. For a positive real number $\eta$ and weighted graph $W$ with $n$ vertices, an \emph{$\eta$-approximate fractional decomposition} of $W$ is a fractional packing $\mathcal{P}^*$ of $W$ such that $\Vert \mathcal{P}^* \Vert \geq \Vert W \Vert - \eta n^2$.

For a graph $F$, the \emph{fractional $F$-decomposition threshold} $\delta^*_F$ is defined as the infimum of all nonnegative real numbers $\delta$ with the property that, for each sufficiently large integer $n$, every $n$-vertex $F$-divisible graph with minimum degree at least $\delta n$ has a fractional $F$-decomposition. Note that requiring that the host graphs be $F$-divisible in this definition ensures that $\delta^*_F \leq \delta_F$ because every $F$-decomposition is a fractional $F$-decomposition. For a weighted graph $W$ that is not a graph, we do not define a notion of $W$-divisibility and nor do we define $\delta_W$. We define the \emph{fractional $W$-decomposition threshold} $\delta^*_W$ as the infimum of all nonnegative real numbers $\delta$ with the property that, for each sufficiently large integer $n$, every $n$-vertex graph with minimum degree at least $\delta n$ has a fractional $W$-decomposition.

\section{Lower bounds}\label{S:LBs}

We now present two lemmas which give lower bounds on fractional decomposition thresholds of weighted graphs. These provide a limit on how much the upper bound of Theorem~\ref{T:weightedTriDecomp} can be improved. The first of them also specialises to give the well-known lower bound on the decomposition thresholds of odd-length cycles that we mentioned previously. These lemmas will also be of use in Section~\ref{S:tripartite} where we consider decomposition thresholds of various tripartite graphs. A weighted graph is \emph{empty} if it has no edges.

\begin{lemma}\label{L:lowerBound}
Let $\rho$ be a positive real number and let $W$ be a nonempty weighted graph such that, for every partition $\{U_1,U_2\}$ of $V(W)$, we have $\Vert W[U_1] \cup W[U_2]\Vert \geq \rho \Vert W \Vert$. Then
\[\delta^*_{W} \geq \mfrac{1}{2}+\mfrac{\rho}{2-2\rho}.\]
\end{lemma}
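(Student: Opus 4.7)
The plan is to exhibit, for every $\epsilon>0$ and infinitely many $n$, an $n$-vertex graph $G$ with minimum degree at least $\bigl(\tfrac{1}{2}+\tfrac{\rho}{2-2\rho}-\epsilon\bigr)n$ that admits no fractional $W$-decomposition. Noting that $\tfrac{1}{2}+\tfrac{\rho}{2-2\rho}=\tfrac{1}{2(1-\rho)}$, I take a balanced bipartition $V(G)=A\sqcup B$ with $|A|=|B|=n/2$, set $G[A,B]$ to be the complete bipartite graph $K_{n/2,n/2}$, and let $G[A]$ and $G[B]$ each be a $d$-regular graph on $n/2$ vertices, where $d$ is the least integer at least $\bigl(\tfrac{1}{2(1-\rho)}-\tfrac12-\epsilon\bigr)n$ for which such regular graphs exist (routine parity considerations allow this). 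Every vertex of $G$ then has degree $n/2+d\ge\bigl(\tfrac{1}{2(1-\rho)}-\epsilon\bigr)n$, while the fraction of edges of $G$ lying within the parts is $\tfrac{nd/2}{n^2/4+nd/2}=\tfrac{d}{n/2+d}$, which a short calculation shows is strictly less than $\rho$ for all sufficiently large $n$.

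Next, I would argue that no fractional $W$-decomposition can exist for this $G$. Suppose $\mathcal{G}^*$ were such a decomposition. Each $W'\in\mathcal{G}^*$ is a scaled copy of $W$ with some scale factor $\alpha_{W'}$, obtained by applying an isomorphism $f_{W'}$ to the underlying graph of $W$. Pulling the bipartition $\{A\cap V(W'),B\cap V(W')\}$ back through $f_{W'}$ yields a partition $\{U_1,U_2\}$ of $V(W)$, and the total weight that $W'$ places on edges lying within a single part of $G$ equals $\alpha_{W'}\Vert W[U_1]\cup W[U_2]\Vert\ge\alpha_{W'}\rho\Vert W\Vert=\rho\Vert W'\Vert$ by the hypothesis.

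Summing over $W'\in\mathcal{G}^*$ and using that $\Vert\mathcal{G}^*\Vert=|E(G)|$ since every edge of $G$ receives total weight exactly $1$, the number of within-part edges of $G$ must be at least $\rho|E(G)|$, contradicting the strict upper bound established above. This yields $\delta^*_W\ge\tfrac{1}{2(1-\rho)}-\epsilon$ for every $\epsilon>0$ and hence the claimed bound. The only fiddly point is the regular-graph construction, which is handled by standard parity adjustments; the main conceptual step is simply the double-counting observation that within-part weight is preserved under scaled isomorphism, so the global lower bound $\rho$ on each local copy transfers directly to a lower bound on the number of within-part edges of the host.
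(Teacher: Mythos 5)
Your construction and argument are essentially the same as the paper's: a complete bipartite graph between two equal halves, augmented by regular graphs inside each half, with the partition-density hypothesis on $W$ forcing every scaled copy to spend at least a $\rho$-fraction of its weight on within-part edges, contradicting the fact that fewer than a $\rho$-fraction of the edges of $G$ lie within parts. The double-counting step (pulling the bipartition $\{A,B\}$ back through each isomorphism $f_{W'}$) is exactly the paper's observation, and the minimum-degree and density calculations match.

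However, there is one genuine omission. When $W$ is an unweighted graph, the paper's definition of $\delta^*_W$ only quantifies over $W$-\emph{divisible} host graphs, so the counterexample $G$ must satisfy $|E(W)| \mid |E(G)|$ and $\gcd(W) \mid \gcd(G)$. Your ``routine parity considerations'' address only the existence of the $d$-regular graphs inside each half, not $W$-divisibility of $G$. The paper handles this by setting $g = \gcd(W)\,|E(W)|$, insisting $n \equiv 0 \pmod{4g}$ and choosing the inside-regularity $h \equiv 0 \pmod{2g}$; this makes $G$ a $\bigl(\tfrac{n}{2}+h\bigr)$-regular graph that is $W$-divisible. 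Without such congruence conditions on $n$ and $d$, your $G$ may fail to be $W$-divisible and therefore would not witness a lower bound on $\delta^*_W$. The fix is exactly the paper's congruence choice; it is compatible with your construction (there is ample slack since $\epsilon n$ vastly exceeds $2g$ for large $n$), but it needs to be stated.
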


\begin{proof}
Let $g \coloneqq \gcd(W)\, |E(W)|$ if $W$ is a graph and $g \coloneqq 1$ otherwise. It is clear that we must have $\rho < 1$. For a given $n \equiv 0 \mod{4g}$, let $V_1$ and $V_2$ be disjoint vertex sets with $|V_1|=|V_2|=\frac{n}{2}$. Let $h$ be the largest integer such that
$h \equiv 0 \mod{2g}$ and $h < \frac{\rho}{2-2\rho}n$. Let $G_0$ be the vertex disjoint union of an $h$-regular graph with vertex set $V_1$ and an $h$-regular graph with vertex set $V_2$, let $G_1$ be the complete bipartite graph with bipartition $\{V_1,V_2\}$, and let $G$ be the edge disjoint union of $G_0$ and $G_1$. Observe that $G$ is $(\frac{n}{2}+h)$-regular and hence, if $W$ is a graph, then $G$ is $W$-divisible since $n \equiv 0 \mod{4g}$ and $h \equiv 0 \mod{2g}$. For any real $\epsilon >0$, we can take $n$ to be sufficiently large that  $h \geq (\frac{\rho}{2-2\rho}-\epsilon)n$ and hence that the valency of $G$ is at least $(\frac{1}{2}+\frac{\rho}{2-2\rho}-\epsilon)n$. So it suffices to show that $G$ does not have a fractional decomposition into copies of $W$. By our hypotheses, any scaled copy $W'$ of $W$ in $G$ must have weight at least $\rho \Vert W' \Vert$ on edges of $G_0$ and hence weight at most $(1-\rho) \Vert W' \Vert$ on edges of $G_1$. Thus there cannot be a fractional $W$-decomposition of $G$ because
\[\mfrac{|E(G_0)|}{|E(G_1)|} = \mfrac{\frac{1}{2}hn}{\frac{1}{4}n^2} <  \mfrac{\rho}{1-\rho},\] where we used $h < \frac{\rho}{2-2\rho}n$ to obtain the inequality.
\end{proof}

Taking $W$ to be $T_{e_1,e_2,e_3}$ and $\rho=\frac{e_3}{e_1+e_2+e_3}$ in Lemma~\ref{L:lowerBound} shows that the first expression in the maximum in Theorem~\ref{T:weightedTriDecomp} cannot be improved beyond $\frac{e_3}{2e_1+2e_2}$. Also,
taking $W$ to be an $\ell$-cycle and $\rho=\frac{1}{\ell}$ in Lemma~\ref{L:lowerBound}, we have the following well-known lower bound for the decomposition threshold of an odd-length cycle.

\begin{corollary}
For each odd $\ell \geq 3$, we have $\delta^*_{C_{\ell}} \geq \frac{1}{2}+\frac{1}{2\ell-2}$.
\end{corollary}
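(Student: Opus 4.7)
The plan is to apply Lemma~\ref{L:lowerBound} directly, taking $W$ to be $C_\ell$ viewed as a weighted graph whose every edge has weight~$1$ (so $\Vert W \Vert = \ell$) and choosing $\rho = 1/\ell$.

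The only hypothesis requiring verification is that for every partition $\{U_1,U_2\}$ of $V(C_\ell)$ we have $\Vert W[U_1] \cup W[U_2] \Vert \geq \rho \Vert W \Vert = 1$. Since every edge has weight~$1$, this amounts to saying that any $2$-colouring of the vertex set of $C_\ell$ produces at least one monochromatic edge; equivalently, that $C_\ell$ is not bipartite. Because $\ell$ is odd this is immediate, so the hypothesis is satisfied. This is the only substantive step and presents essentially no obstacle.

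With the hypothesis in hand, Lemma~\ref{L:lowerBound} yields
\[
\delta^*_{C_\ell} \geq \frac{1}{2} + \frac{\rho}{2-2\rho} = \frac{1}{2} + \frac{1/\ell}{2(1 - 1/\ell)} = \frac{1}{2} + \frac{1}{2\ell-2},
\]
which is the claimed bound.
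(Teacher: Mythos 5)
Your proposal is correct and matches the paper's approach exactly: the paper also applies Lemma~\ref{L:lowerBound} with $W = C_\ell$ and $\rho = 1/\ell$, and the verification you give (that any $2$-partition of $V(C_\ell)$ leaves a monochromatic edge because $C_\ell$ is not bipartite for odd $\ell$) is precisely the implicit justification the paper leaves to the reader.
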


Our second lower bound is based on a different partition density condition. A proper $k$-colouring of a weighted graph is simply a proper $k$-colouring of its underlying graph.

\begin{lemma}\label{L:lowerBound2}
Let $\rho<1$ be a real number and let $W$ be a nonempty properly $4$-colourable weighted graph such that, for every
proper $4$-colouring of $W$ with colour classes $\{U_1,U_2,U_3,U_4\}$, we have $\Vert W[U_1,U_2] \cup W[U_3,U_4] \Vert \leq \rho\Vert W \Vert$. Then
\begin{equation}\label{E:lowerBound2}
\delta^*_{W} \geq \mfrac{1}{4}\left(3-\sqrt{\mfrac{3\rho-1}{1+\rho}}\,\right) \geq \mfrac{1}{2}+\mfrac{1-\rho}{2+6\rho}.
\end{equation}
\end{lemma}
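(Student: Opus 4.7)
The plan is to adapt the construction of Lemma~\ref{L:lowerBound} to the four-part setting forced by the hypothesis. Take $G$ to be the complete 4-partite graph $K_{a,a,b,b}$ with vertex classes $V_1,V_2,V_3,V_4$ satisfying $|V_1|=|V_2|=a\geq b=|V_3|=|V_4|$ and $2a+2b=n$, where the ratio $\gamma\coloneqq b/a$ will be chosen appropriately. Standard divisibility adjustments, analogous to those in Lemma~\ref{L:lowerBound}, can be made so that $G$ is $W$-divisible when $W$ is a graph, without affecting the limiting ratio $b/a$.

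To rule out a fractional $W$-decomposition of $G$, suppose one exists. Since each $V_i$ is independent in $G$, any scaled copy $W'$ of $W$ used in the decomposition pulls back via its defining isomorphism to a partition of $V(W)$ into independent sets $U_1,U_2,U_3,U_4$, with $U_i$ the preimage of $V(W')\cap V_i$. Applying the hypothesis to this partition gives
\[\Vert W'[V_1,V_2]\Vert + \Vert W'[V_3,V_4]\Vert \leq \rho\,\Vert W'\Vert.\]
Summing this over all scaled copies in the decomposition, and using that each edge of $G$ is covered with total weight $1$, yields
\[a^2+b^2 = |E(G[V_1,V_2])|+|E(G[V_3,V_4])| \leq \rho\,|E(G)| = \rho(a^2+b^2+4ab).\]
Rearranging, this amounts to $\gamma^2-\frac{4\rho}{1-\rho}\gamma+1 \leq 0$, whose roots are $\gamma_\pm=\frac{2\rho\pm\sqrt{(3\rho-1)(1+\rho)}}{1-\rho}$ (real precisely when $\rho\geq\tfrac{1}{3}$) and satisfy $\gamma_-\gamma_+=1$. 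Choosing $\gamma$ strictly less than $\gamma_-$ (when $\rho\geq\tfrac{1}{3}$), or any $\gamma\leq 1$ when $\rho<\tfrac{1}{3}$ (in which case the quadratic is positive definite), produces the desired contradiction.

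The minimum degree of $G$ is attained in $V_1\cup V_2$ and equals $a+2b=\frac{(1+2\gamma)n}{2(1+\gamma)}$. Substituting $\gamma=\gamma_-$ and simplifying with the identity $q^2(1+\rho)=3\rho-1$ for $q\coloneqq\sqrt{(3\rho-1)/(1+\rho)}$ converts this expression into $\frac{3-q}{4}n$; letting $\gamma$ approach $\gamma_-$ from below then yields the first lower bound in \eqref{E:lowerBound2}. The second inequality in \eqref{E:lowerBound2} is a routine algebraic check which, after clearing the square root and squaring, reduces to $(5\rho-1)^2(1+\rho)\geq (3\rho-1)(1+3\rho)^2$; the two sides differ by $2(1-\rho)^3$, which is nonnegative for $\rho\leq 1$.

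The main technical subtlety is purely bookkeeping: one must choose integer sizes $a,b$ obeying the relevant $W$-divisibility constraints while keeping $b/a$ arbitrarily close to $\gamma_-$ for infinitely many $n$, and verify the algebraic identity $\frac{1+2\gamma_-}{2(1+\gamma_-)}=\frac{3-q}{4}$. These are routine and parallel the corresponding steps in the proof of Lemma~\ref{L:lowerBound}.
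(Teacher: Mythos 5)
Your proof is correct and follows essentially the same approach as the paper: the host graph is the same complete $4$-partite graph $K_{a,a,b,b}$, the contradiction comes from the same comparison of the total weight on $G[V_1,V_2]\cup G[V_3,V_4]$ against $\rho\|W\|$, and the algebra matches after reparameterizing by $\gamma=b/a$ rather than the paper's $h/n$ (the identity $\frac{1+2\gamma_-}{2(1+\gamma_-)}=\frac{3-q}{4}$ is exactly this change of variable). The one small omission is that the paper first establishes $\rho\ge\frac13$ by noting that the three pairings of any independent $4$-partition have weights summing to $\|W\|$, which ensures the square root in \eqref{E:lowerBound2} is real; you should make this explicit rather than treating $\rho<\frac13$ as a live case.
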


\begin{proof}
For any proper $4$-colouring of $W$, we may permute the colours in such a way that $\Vert W[U_1,U_2] \cup W[U_3,U_4] \Vert \geq \Vert W[U_1,U_3] \cup W[U_2,U_4] \Vert \geq \Vert W[U_1,U_4] \cup W[U_2,U_3] \Vert$ and hence $\Vert W[U_1,U_2] \cup W[U_3,U_4] \Vert \geq \frac{1}{3}\Vert W \Vert$. Thus it must be that $\rho \geq \frac{1}{3}$.
Observe that the second inequality in \eqref{E:lowerBound2} is equivalent to $(\frac{3\rho-1}{1+\rho})^{1/2} \leq \frac{5\rho-1}{1+3\rho}$ and hence, squaring both sides and simplifying, to the true statement $(1-\rho)^3 \geq 0$. So it remains to prove the first inequality in \eqref{E:lowerBound2}.

Let $g=\gcd(W)\,|E(W)|$ if $W$ is a graph and $g=1$ otherwise. Let $\gamma \coloneqq \frac{1}{4}(1+(\frac{3\rho-1}{1+\rho})^{1/2})$ and note that $\frac{1}{4} \leq \gamma < \frac{1}{2}$ and that the second expression in \eqref{E:lowerBound2} is $1-\gamma$. Let $n \equiv 0 \mod{4g}$ be large and let $h$ be the smallest integer such that $h \equiv 0 \mod{2g}$ and $h > \gamma n$. Take $G$ to be the complete 4-partite graph with partition $\{V_1,V_2,V_3,V_4\}$ where $|V_1|=|V_2|=h$ and $|V_3|=|V_4|=\frac{n}{2}-h$. Observe that each vertex in $G$ has degree in $\{\frac{n}{2}+h,n-h\}$ and hence, if $W$ is a graph then $G$ is $W$-divisible since $n \equiv 0 \mod{4g}$ and $h \equiv 0 \mod{2g}$. For any real $\epsilon >0$, we can take $n$ to be sufficiently large that $h \leq (\gamma+\epsilon)n$ and hence that the minimum degree of $G$ is at least $(1-\gamma-\epsilon)n$. So it suffices to show that $G$ does not have a fractional decomposition into copies of $W$. Let $G_0=G[V_1,V_2] \cup G[V_3,V_4]$ and $G_1=G[V_1 \cup V_2,V_3 \cup V_4]$ so that $G$ is the edge-disjoint union of $G_0$ and $G_1$. By our hypotheses, any scaled copy $W'$ of $W$ in $G$ must have weight at most $\rho \Vert W' \Vert$ on edges of $G_0$ and hence weight at least $(1-\rho) \Vert W' \Vert$ on edges of $G_1$. Thus there cannot be a fractional $W$-decomposition of $G$ because
\[\mfrac{|E(G_0)|}{|E(G_1)|} = \mfrac{h^2+(\frac{n}{2}-h)^2}{2h(n-2h)} = \mfrac{n^2}{8h(n-2h)}-\mfrac{1}{2} > \mfrac{1}{8\gamma(1-2\gamma)}-\mfrac{1}{2} = \mfrac{\rho}{1-\rho},\]
where the inequality is obtained by substituting $h > \gamma n$, noting that $\frac{n^2}{8x(n-2x)}$ is increasing in $x$ on the interval $[\frac{n}{4},\frac{n}{2})$.
\end{proof}

Taking $W$ to be $T_{e_1,e_2,e_3}$ and $\rho=\frac{e_1}{e_1+e_2+e_3}$ in the weaker bound in Lemma~\ref{L:lowerBound2} shows that the second expression in the maximum in Theorem~\ref{T:weightedTriDecomp} cannot be improved beyond $\frac{e_2+e_3}{8e_1+2e_2+2e_3}$.

\section{Fractional decompositions into weighted triangles}\label{S:weightedTriDecomps}

In this section we prove Theorem~\ref{T:weightedTriDecomp}. The following preliminary lemma is a straightforward consequence of two classical results on convexity: Hardy, Littlewood and P\'olya's theorem on majorization \cite{HarLitPol} and Birkhoff's theorem on doubly-stochastic matrices \cite{Bir1946}. Note that a \emph{convex combination} of vectors is a linear combination in which all the coefficients are nonnegative and sum to $1$.

\begin{lemma}[\cite{Bir1946, HarLitPol}; see also \cite{Rus1981}]
\label{L:permuted-convex}
Suppose $a_1 \ge \dots \ge a_n \ge 0$ and $b_1 \ge \cdots \ge b_n \ge 0$ are real numbers satisfying $\sum_{i=1}^k a_i \ge \sum_{i=1}^k b_i$ for each $k=1,\dots,n-1$, and $\sum_{i=1}^n a_i = \sum_{i=1}^n b_i$.  Then $(b_1,\dots,b_n)$ is a convex combination of the vectors in $\{(a_{\pi(1)},\dots,a_{\pi(n)}):\pi \in S_n\}$, where $S_n$ is the set of all permutations of $\{1,\ldots,n\}$.
\end{lemma}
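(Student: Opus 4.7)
The plan is to recognize the hypothesis as precisely the classical majorization condition and then to chain together the two results credited in the statement: Hardy--Littlewood--P\'olya's theorem on majorization and Birkhoff's theorem on doubly stochastic matrices. Because both sequences $(a_i)$ and $(b_i)$ are already presented in nonincreasing order, with equal total sums and with the partial-sum inequalities $\sum_{i=1}^k a_i \ge \sum_{i=1}^k b_i$, the vector $(a_1,\dots,a_n)$ majorizes $(b_1,\dots,b_n)$ in the standard sense. No preliminary rearrangement is needed.

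The first step would be to invoke Hardy--Littlewood--P\'olya: under this majorization, there exists an $n \times n$ doubly stochastic matrix $D=(d_{ij})$ with
\[ b_i=\sum_{j=1}^n d_{ij}\,a_j \qquad \text{for each } i \in \{1,\dots,n\}. \]
The second step would be to apply Birkhoff's theorem, which states that the set of $n \times n$ doubly stochastic matrices is exactly the convex hull of the $n!$ permutation matrices. This yields nonnegative reals $\{\lambda_\pi : \pi \in S_n\}$ with $\sum_{\pi \in S_n}\lambda_\pi=1$ and $D=\sum_{\pi \in S_n}\lambda_\pi P_\pi$, where $P_\pi$ denotes the permutation matrix of $\pi$.

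Substituting this decomposition of $D$ into the identity $b=Da$ and noting that the action of $P_\pi$ on $(a_1,\dots,a_n)^\top$ produces $(a_{\pi(1)},\dots,a_{\pi(n)})^\top$ (for a suitable convention of $P_\pi$), I obtain
\[ (b_1,\dots,b_n)=\sum_{\pi \in S_n}\lambda_\pi\,(a_{\pi(1)},\dots,a_{\pi(n)}), \]
which is exactly the claimed convex combination.

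I do not expect any genuine obstacle here, since both ingredients are classical and the hypotheses are framed precisely so as to apply them directly; the only care needed is to pin down the correct side of the matrix action and to check that the convention for majorization matches the one in the cited references. If a more self-contained presentation were desired, one could give the short constructive proof of Hardy--Littlewood--P\'olya by iteratively ``smoothing'' $a$ towards $b$ via pairwise transfers, but invoking the named theorems as the lemma's statement already anticipates is cleaner and shorter.
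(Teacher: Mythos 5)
Your proposal is correct and follows exactly the same two-step route as the paper: invoke Hardy--Littlewood--P\'olya to obtain a doubly stochastic matrix $D$ with $b=Da$, then apply Birkhoff's theorem to write $D$ as a convex combination of permutation matrices. The paper presents this only as a brief sketch; your write-up simply spells out the same argument in more detail.
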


Briefly, if we let $\vec{a}=(a_1,\dots,a_n)$ and $\vec{b}=(b_1,\dots,b_n)$ be vectors as in Lemma~\ref{L:permuted-convex}, then from Hardy-Littlewood-P\'olya there exists a doubly-stochastic matrix $P$ such that $P\vec{a}=\vec{b}$, and from Birkhoff's theorem, $P$ is a convex combination of permutation matrices. In what follows, we make use of the case $n=3$, where the conditions $\sum_{i=1}^k a_i \ge \sum_{i=1}^k b_i$ for $k=1,2$ and $\sum_{i=1}^n a_i = \sum_{i=1}^n b_i$ reduce to $a_1 \ge b_1$, $b_3 \ge a_3$, and $a_1+a_2+a_3=b_1+b_2+b_3$.

Recall that, for any positive real numbers $a$, $b$ and $c$, we let $T_{a,b,c}$ denote the weighted $K_3$ whose edges have weights $a$, $b$ and $c$. We call such graphs \emph{weighted triangles}. Our next result characterises when there exists a fractional decomposition of one weighted triangle into scaled copies of another.

\begin{theorem}\label{T:weightedTriIntoWeightedTri}
Let $w_1$, $w_2$, $w_3$, $e_1$, $e_2$, $e_3$ be positive real numbers such that $w_1 \geq w_2 \geq w_3$ and $e_1 \geq e_2 \geq e_3$. Then $T_{w_1,w_2,w_3}$ has a fractional $(T_{e_1,e_2,e_3})$-decomposition if and only if
\begin{equation}\label{E:goodTest}
\mfrac{w_3}{w_1+w_2+w_3} \ge \mfrac{e_3}{e_1+e_2+e_3}\quad\text{ and }\quad\mfrac{w_1}{w_1+w_2+w_3} \le \mfrac{e_1}{e_1+e_2+e_3}.
\end{equation}
\end{theorem}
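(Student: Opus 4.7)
The plan is to handle both directions separately, with the 'only if' direction being a short counting argument and the 'if' direction reducing cleanly to the $n=3$ case of Lemma~\ref{L:permuted-convex}.

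For the 'only if' direction, suppose $\T^*$ is a fractional $(T_{e_1,e_2,e_3})$-decomposition of $T_{w_1,w_2,w_3}$, and let $\alpha_T>0$ be the scale factor of each $T\in\T^*$. Summing weights over all three edges gives $\bigl(\sum_T\alpha_T\bigr)(e_1+e_2+e_3)=w_1+w_2+w_3$. On the edge of weight $w_1$ each $T$ contributes at most $\alpha_T e_1$, while on the edge of weight $w_3$ each $T$ contributes at least $\alpha_T e_3$; dividing these two inequalities by the total yields the two inequalities of \eqref{E:goodTest}.

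For the 'if' direction I would set $S:=(w_1+w_2+w_3)/(e_1+e_2+e_3)$ and apply Lemma~\ref{L:permuted-convex} to $\vec{a}:=(Se_1,Se_2,Se_3)$ and $\vec{b}:=(w_1,w_2,w_3)$. Both triples are weakly decreasing by the theorem hypothesis, and they have equal sum by construction. The two inequalities of \eqref{E:goodTest} rearrange to $Se_1\ge w_1$ and $Se_3\le w_3$, i.e.\ $a_1\ge b_1$ and $a_3\le b_3$, which are exactly the majorisation conditions required by the lemma. The lemma then supplies nonnegative $\lambda_\pi$ with $\sum_{\pi\in S_3}\lambda_\pi=1$ and
\[
\vec{b} \;=\; \sum_{\pi\in S_3}\lambda_\pi\bigl(a_{\pi(1)},a_{\pi(2)},a_{\pi(3)}\bigr).
\]

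Finally I would translate this convex combination into the required fractional decomposition. For each $\pi\in S_3$, place a scaled copy of $T_{e_1,e_2,e_3}$ on $T_{w_1,w_2,w_3}$ with scale factor $S\lambda_\pi$, using the triangle isomorphism determined by $\pi$ (so that the $e_{\pi(i)}$-edge of the copy lies on the $w_i$-edge of $T_{w_1,w_2,w_3}$). The weight contributed to the $i$-th edge is then $\sum_\pi S\lambda_\pi e_{\pi(i)}=\sum_\pi\lambda_\pi a_{\pi(i)}=b_i=w_i$, so this is indeed a fractional $(T_{e_1,e_2,e_3})$-decomposition. The only slightly delicate step — and hence such 'obstacle' as exists — is noticing that \eqref{E:goodTest} is precisely the normalised form of the majorisation inequalities for $\vec{a}$ against $\vec{b}$; once this alignment is spotted, every remaining step is a direct verification.
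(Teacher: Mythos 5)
Your proof is correct and takes essentially the same approach as the paper: a direct weight-counting argument for the `only if' direction and an application of the Hardy--Littlewood--P\'olya/Birkhoff lemma (Lemma~\ref{L:permuted-convex}) for the `if' direction. Your normalisation (scaling $\vec{a}$ to equalise the sums rather than dividing both sides by their totals) and your explicit summation over all scaled copies in the `only if' direction are minor presentational variants of what the paper does, and you correctly identify $a_1\ge b_1$, $a_3\le b_3$ as the needed $n=3$ majorisation conditions.
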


\begin{proof}
We first prove the `only if' direction. Considering averages, a fractional $(T_{e_1,e_2,e_3})$-decomposition of $T_{w_1,w_2,w_3}$ must contain weighted graphs $T'$ and $T''$ such that $T'$ has weight at most $\frac{w_3}{w_1+w_2+w_3} \Vert T' \Vert$ on the edge of weight $w_3$ and $T''$ has weight at least $\frac{w_1}{w_1+w_2+w_3} \Vert T'' \Vert$ on the edge of weight $w_1$ in $T_{w_1,w_2,w_3}$. So, since $T'$ and $T''$ are scaled copies of $T_{e_1,e_2,e_3}$, their existence implies, respectively, that the first and second conditions of \eqref{E:goodTest} hold.

Now we prove the `if' direction. Suppose the conditions of \eqref{E:goodTest} hold. Apply Lemma~\ref{L:permuted-convex} in the case $n=3$ with $a_i$ taking the role of $\frac{e_i}{e_1+e_2+e_3}$ and $b_i$ taking the role of $\frac{w_i}{w_1+w_2+w_3}$. Then the conditions of \eqref{E:goodTest} tell us that $b_3 \ge a_3$ and $b_1 \le a_1$.  Thus, by Lemma~\ref{L:permuted-convex}, $(b_1,b_2,b_3)$ is a convex combination of the six permuted copies of $(a_1,a_2,a_3)$.  It follows by scaling that $(w_1,w_2,w_3)$ is a nonnegative linear combination of the six permuted copies of $(e_1,e_2,e_3)$, each of which can be induced by a permutation of the three vertices of a weighted triangle $T_{e_1,e_2,e_3}$.
\end{proof}

\begin{remark}
If desired, explicit coefficients for a fractional $(T_{e_1,e_2,e_3})$-decomposition of $T_{w_1,w_2,w_3}$ can be found by solving a system of linear equations in nonnegative rationals.
\end{remark}

Note that Theorem~\ref{T:weightedTriIntoWeightedTri} implies that, for fixed $e_1 \geq e_2 \geq e_3$, a graph has a fractional $(T_{e_1,e_2,e_3})$-decomposition if and only if it has a fractional decomposition into weighted triangles each of which is, for some $w_1 \geq w_2 \geq w_3$ satisfying \eqref{E:goodTest}, a copy of $T_{w_1,w_2,w_3}$. Let $\ell \geq 3$ be an integer. In the case $(e_1,e_2,e_3)=(\ell-2,1,1)$, the inequalities in \eqref{E:goodTest} reduce to $(\ell-1)w_3 \ge w_1+w_2$ and $2w_1 \le (\ell-2)(w_2+w_3)$, and it is not difficult to see that the first of these implies the second.

We will use Theorem~\ref{T:weightedTriIntoWeightedTri} to prove Theorem~\ref{T:weightedTriDecomp}. For any edge $xy$ of a graph $G$, let $t_{xy}$ denote the number of vertices in $V(G) \setminus \{x,y\}$ that are adjacent to both $x$ and $y$.
We first prove some simple facts about the values of $t_{xy}$ in graphs with specified minimum degree.

\begin{lemma}\label{L:tRatio}
Let $\gamma$ be a positive real number with $\gamma < \frac{1}{2}$ and let $G$ be a graph with $n$ vertices and minimum degree at least $(1-\gamma)n$. For any adjacent edges $e$ and $e'$ in $G$ we have
\[\mfrac{1-2\gamma}{1-\gamma} \leq \mfrac{t_{e}}{t_{e'}} \leq \mfrac{1-\gamma}{1-2\gamma}\]
\end{lemma}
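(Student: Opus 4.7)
Write $e = xy$ and $e' = yz$, so $y$ is the common endpoint. I expect the ratio bound to come from two elementary facts: a lower bound on $t_{e'}$ depending only on the minimum degree, and an upper bound on the additive gap $t_e - t_{e'}$ coming from the few vertices that fail to be adjacent to $z$.

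For the lower bound, I would use the standard inclusion--exclusion identity
\[ t_{yz} \;=\; |N(y) \cap N(z)| \;=\; \deg(y) + \deg(z) - |N(y) \cup N(z)| \;\geq\; 2(1-\gamma)n - n \;=\; (1-2\gamma)n, \]
valid since $N(y) \cup N(z) \subseteq V(G)$. By symmetry the same bound holds for $t_{xy}$ (and for any edge at all), so both codegrees are positive and the ratio $t_e/t_{e'}$ is well defined.

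For the additive comparison, I would observe that if $v \in N(x) \cap N(y)$ but $v \notin N(y) \cap N(z)$, then $v$ lies in $N(y)$ already, so the only way $v$ can fail to lie in $N(y) \cap N(z)$ is $v \notin N(z)$. Consequently
\[ (N(x)\cap N(y)) \setminus (N(y)\cap N(z)) \;\subseteq\; V(G) \setminus N(z), \]
and the right-hand side has size $n - \deg(z) \leq \gamma n$. Hence $t_{xy} \leq t_{yz} + \gamma n$. Dividing by $t_{yz}$ and using the lower bound from the previous paragraph yields
\[ \frac{t_{xy}}{t_{yz}} \;\leq\; 1 + \frac{\gamma n}{t_{yz}} \;\leq\; 1 + \frac{\gamma n}{(1-2\gamma)n} \;=\; \frac{1-\gamma}{1-2\gamma}. \]
Finally, exchanging the roles of $e$ and $e'$ (equivalently, swapping $x$ and $z$) gives the matching inequality $t_{yz}/t_{xy} \leq (1-\gamma)/(1-2\gamma)$, which rearranges to the claimed lower bound $t_{xy}/t_{yz} \geq (1-2\gamma)/(1-\gamma)$.

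There is no real obstacle here; the only subtlety is keeping track of which vertex's non-neighbourhood controls the symmetric difference of the two codegree sets, and checking that the trivial inclusions $x,z \notin N(x) \cap N(y) \cap N(z)$-type conditions do not affect the set-theoretic inclusion above. The argument is purely local and does not require either edge to lie in a particular type of subgraph.
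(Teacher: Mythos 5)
Your proof is correct and uses the same two elementary facts as the paper's proof: the codegree lower bound $t_{e'}\geq(1-2\gamma)n$ from inclusion--exclusion, and the bound $t_e\leq t_{e'}+\gamma n$ for adjacent edges, coming from the non-neighbourhood of the endpoint not shared. The only difference is organisational: you combine them in a single chain of inequalities, whereas the paper splits into cases $t_e\leq(1-\gamma)n$ and $t_e>(1-\gamma)n$ and appeals to monotonicity of $x/(x-\gamma n)$ in the second case; your version is marginally more streamlined but it is essentially the same argument.
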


\begin{proof}
Without loss of generality it suffices to suppose that $t_{e'} \leq t_{e}$ and prove that $\frac{t_{e}}{t_{e'}} \leq \frac{1-\gamma}{1-2\gamma}$. Since $t_{e'} \geq (1-2\gamma)n$, this holds if $t_{e}\leq (1-\gamma)n$. Otherwise $t_{e} > (1-\gamma)n$ and $t_{e'} \geq t_e-\gamma n$ since $e$ and $e'$ are adjacent. So we have
\[\mfrac{t_{e}}{t_{e'}} \leq \mfrac{t_{e}}{t_e-\gamma n} < \mfrac{1-\gamma}{1-2\gamma}\]
where the last inequality holds because $t_{e} > (1-\gamma)n$ and $\frac{x}{x-\gamma n}$ is decreasing in $x$.
\end{proof}

\begin{lemma}\label{L:tRatio2}
Let $\gamma$ be a positive real number with $\gamma < \frac{1}{2}$ and let $G$ be a graph with $n$ vertices and minimum degree at least $(1-\gamma)n$. Suppose there is a copy of $K_3$ in $G$ with vertex set $\{x,y,z\}$ such that $t_{xy} \leq t_{xz} \leq t_{yz}$. Then
\begin{itemize}
    \item[\textup{(i)}]
$\frac{1}{t_{yz}} \geq \frac{1-2\gamma}{2-2\gamma}(\frac{1}{t_{xy}}+\frac{1}{t_{xz}})$
    \item[\textup{(ii)}]
$\frac{1}{t_{xy}} \leq \frac{1-\gamma}{2-4\gamma}(\frac{1}{t_{xz}}+\frac{1}{t_{yz}})$
\end{itemize}
\end{lemma}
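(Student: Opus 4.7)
The plan is to observe that both statements follow essentially immediately from Lemma~\ref{L:tRatio} applied to the pairwise adjacent edges of the triangle $\{x,y,z\}$. Write $a=t_{xy}$, $b=t_{xz}$, $c=t_{yz}$, so by hypothesis $a \le b \le c$. Since all three edges are pairwise adjacent, Lemma~\ref{L:tRatio} controls all three of the ratios $a/b$, $a/c$, $b/c$ (and their reciprocals) by $\frac{1-2\gamma}{1-\gamma}$ and $\frac{1-\gamma}{1-2\gamma}$.

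For part (i), applying Lemma~\ref{L:tRatio} to the adjacent pairs $\{xy,yz\}$ and $\{xz,yz\}$ yields $a \ge \frac{1-2\gamma}{1-\gamma}\,c$ and $b \ge \frac{1-2\gamma}{1-\gamma}\,c$. Taking reciprocals and adding gives
\[
\frac{1}{a}+\frac{1}{b} \;\le\; \frac{2(1-\gamma)}{(1-2\gamma)\,c},
\]
which rearranges exactly to the claimed inequality $\frac{1}{c}\ge \frac{1-2\gamma}{2-2\gamma}\bigl(\frac{1}{a}+\frac{1}{b}\bigr)$.

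For part (ii), applying Lemma~\ref{L:tRatio} in the other direction to the adjacent pairs $\{xy,xz\}$ and $\{xy,yz\}$ gives $b \le \frac{1-\gamma}{1-2\gamma}\,a$ and $c \le \frac{1-\gamma}{1-2\gamma}\,a$, so that
\[
\frac{1}{b}+\frac{1}{c} \;\ge\; \frac{2(1-2\gamma)}{(1-\gamma)\,a},
\]
which rearranges to $\frac{1}{a}\le \frac{1-\gamma}{2-4\gamma}\bigl(\frac{1}{b}+\frac{1}{c}\bigr)$.

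There is no real obstacle here beyond bookkeeping: the content of the lemma is entirely in Lemma~\ref{L:tRatio}, and the only subtlety is making sure to use the lower bound from that lemma in part (i) and the upper bound in part (ii), with the orientations dictated by the ordering $a \le b \le c$.
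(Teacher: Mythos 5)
Your proof is correct and takes essentially the same approach as the paper: both proofs apply Lemma~\ref{L:tRatio} to the pairs of adjacent edges of the triangle, sum the resulting ratio bounds, and rearrange. The only cosmetic difference is that you state the bounds as $a\ge \frac{1-2\gamma}{1-\gamma}c$, etc., and take reciprocals, whereas the paper sums the ratios $t_{yz}/t_{xy}+t_{yz}/t_{xz}$ directly before dividing through; these are the same manipulation.
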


\begin{proof}
By Lemma~\ref{L:tRatio}, we have
\[\mfrac{t_{yz}}{t_{xy}}+\mfrac{t_{yz}}{t_{xz}} \leq \mfrac{2(1-\gamma)}{1-2\gamma} \quad\text{ and }\quad\mfrac{t_{xy}}{t_{xz}}+\mfrac{t_{xy}}{t_{yz}} \geq \mfrac{2(1-2\gamma)}{1-\gamma}.\]
Rearranging the first of these yields (i) and rearranging the second yields (ii).
\end{proof}

We can now prove Theorem~\ref{T:weightedTriDecomp}.

\begin{proof}[\textup{\textbf{Proof of Theorem~\ref{T:weightedTriDecomp}.}}]
Let $\gamma \coloneqq 1-\delta(e_1,e_2,e_3)$. Suppose $G$ is a graph with $n$ vertices and minimum degree at least $(1-\gamma)n$. Let $\mathcal{K}$ be the set of copies of $K_3$ in $G$. For each $K \in \mathcal{K}$, let $W_K$ be the weighted triangle on vertex set $V(K)$ in which each edge $e \in E(K)$ has weight $\frac{1}{t_{e}}$. Since each edge $e$ is in exactly $t_e$ elements of $\mathcal{K}$, it follows that $\mathcal{G^*}=\{W_K:K \in \mathcal{K}\}$ is a fractional decomposition  of $G$. It now suffices to show that each $W \in \mathcal{G^*}$ has a fractional $(T_{e_1,e_2,e_3})$-decomposition.

Fix $W \in \mathcal{G^*}$ and suppose that $W$ is a copy of $T_{w_1,w_2,w_3}$ with $w_1 \geq w_2 \geq w_3$. By Lemma~\ref{L:tRatio2}, $w_3 \geq \frac{1-2\gamma}{2-2\gamma}(w_1+w_2)$ and $w_1 \leq \frac{1-\gamma}{2-4\gamma}(w_2+w_3)$. Using $\gamma \leq \frac{1}{2}-\frac{e_3}{2e_1+2e_2-2e_3}$, the first of these gives us $\frac{w_3}{w_1+w_2} \geq \frac{e_3}{e_1+e_2}$. Using $\gamma \leq \frac{1}{2}-\frac{e_2+e_3}{8e_1-2e_2-2e_3}$, the second of these gives us $\frac{w_1}{w_2+w_3} \leq \frac{e_1}{e_2+e_3}$. Taking reciprocals, adding one to each side, and taking reciprocals again, we see that these facts are equivalent to the conditions of \eqref{E:goodTest}. So by Theorem~\ref{T:weightedTriIntoWeightedTri}, $W$ has a fractional $(T_{e_1,e_2,e_3})$-decomposition and the proof is complete.
\end{proof}

\section{Approximate decompositions from condensations}\label{S:condensations}

Our goal in this section is to prove Theorem~\ref{T:approximateCycleDecomp}. Our arguments follow those used in \cite{HaxRod} and \cite{Yus2005}.  Let $G$ be a graph and let $A$ and $B$ be nonempty disjoint subsets of $V(G)$. The \emph{density} $d(G[A,B])$ of $G[A,B]$ is defined to be $\frac{1}{|A||B|}|E(G[A,B])|$. For a positive real $\epsilon$, we say that $G[A,B]$ is \emph{$\epsilon$-regular} if, for every $X \subseteq A$ and $Y \subseteq B$ with $|X| \geq \epsilon|A|$ and $|Y| \geq \epsilon|B|$, we have $|d(G[X,Y])-d(G[A,B])| \leq \epsilon$. If we also wish to specify the density of $G[A,B]$, we will say it is \emph{$(\epsilon,d(G[A,B]))$-regular}. Further, we say it is \emph{$(\epsilon,\delta \pm \alpha)$-regular} if it is \emph{$\epsilon$-regular} and $\delta-\alpha \leq  d(G[A,B])) \leq \delta+\alpha$.

Let $F$ be a graph on vertex set $\{1,\ldots,k\}$. For an integer $b$ and real numbers $\delta$ and $\epsilon$, we say a graph $H$ is an \emph{$(F,b,\delta,\epsilon)$-graph} if there is a partition $\{V_1,\ldots,V_k\}$ of $V(H)$ such that
\begin{itemize}[nosep]
    \item
$|V_1|=\cdots=|V_k|=b$;
    \item
for all distinct $i,j \in \{1,\ldots,k\}$, $H[V_i \cup V_j]$ is empty if $ij \notin E(F)$; and
    \item
for all distinct $i,j \in \{1,\ldots,k\}$, $H[V_i,V_j]$ is $(\epsilon,\delta\pm\epsilon)$-regular if $ij \in E(F)$.
\end{itemize}
So, roughly speaking, an $(F,b,\delta,\epsilon)$-graph is a graph obtained from $F$ by blowing up each vertex into an independent set of size $b$ and each edge into an $\epsilon$-regular bipartite graph of density approximately $\delta$. We say a graph is an \emph{$(F,b,\geq \delta_0,\epsilon)$-graph} if it is an $(F,b, \delta,\epsilon)$-graph for some $\delta \geq \delta_0$. We will make use of the following result from \cite{HaxRod}.

\begin{lemma}[{\cite[Lemma 5]{HaxRod}}]\label{L:HaxRodFinalDecomp}
Let $F$ be a graph and let $\eta$ and $\delta_0$ be given positive reals. There exist $\epsilon \coloneqq \epsilon(F,\eta,\delta_0)$ and $b_0 \coloneqq b_0(F,\eta,\delta_0)$ such that, if $H$ is an $(F,b,\geq \delta_0,\epsilon)$-graph with $b \geq b_0$, then there is an  $F$-packing of $H$ whose leftover has size at most $\eta|E(H)|$.
\end{lemma}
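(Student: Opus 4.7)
The plan is to apply the Pippenger--Frankl--R\"odl nibble theorem on near-perfect matchings in almost-regular uniform hypergraphs with small codegree. First I would form an auxiliary $|E(F)|$-uniform hypergraph $\mathcal{J}$ on vertex set $E(H)$, whose hyperedges are the edge sets of those copies of $F$ in $H$ that \emph{respect the partition} $\{V_1,\ldots,V_k\}$, meaning that vertex $i$ of $F$ is embedded into $V_i$. A matching in $\mathcal{J}$ corresponds precisely to an $F$-packing of $H$, so it suffices to find a matching of $\mathcal{J}$ covering all but at most $\eta|E(H)|$ vertices.

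The second step is to verify the two hypergraph conditions required by the nibble. Repeated application of the counting lemma for $\epsilon$-regular pairs (with $\epsilon$ chosen sufficiently small in terms of $\delta_0$ and $|V(F)|$) shows that every edge $e \in H[V_i,V_j]$ with $ij \in E(F)$ lies in
\[
T_e = \Bigl(\prod_{i'j' \in E(F)\setminus\{ij\}} d(H[V_{i'},V_{j'}])\Bigr) b^{|V(F)|-2}\bigl(1 \pm o_{\epsilon}(1)\bigr)
\]
partition-respecting copies of $F$. Because each density lies in $[\delta_0-\epsilon,\delta_0+\epsilon]$, the values $T_e$ agree up to a factor of $1\pm o(1)$, so $\mathcal{J}$ is almost-regular of degree $T = \Theta\bigl(\delta_0^{|E(F)|-1} b^{|V(F)|-2}\bigr)$. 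An easier counting argument bounds the codegree: any two distinct edges of $H$ share at most one vertex, so they pin down at least three vertices of $F$ in any partition-respecting copy containing both, leaving at most $|V(F)|-3$ vertices of $F$ free and contributing at most $O(b^{|V(F)|-3})$ such copies. Hence the codegree-to-degree ratio in $\mathcal{J}$ is $O(1/b)$.

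The final step is to invoke the Pippenger--Frankl--R\"odl theorem: any nearly regular uniform hypergraph whose codegree is $o(1)$ times its degree admits a matching covering all but a vanishing fraction of its vertex set. Choosing $\epsilon$ small enough for the counting lemma to deliver both the degree asymptotics and the regularity of $\mathcal{J}$, and then $b_0$ large enough that the codegree ratio and the Pippenger uncovered fraction both fall below $\eta$, produces an $F$-packing of $H$ whose leftover has size at most $\eta|E(H)|$, as required.

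The main obstacle I foresee is the careful bookkeeping of quantifiers: $\epsilon$ must be chosen small in terms of $\delta_0$, $\eta$ and $|V(F)|$ for the counting lemma to control all the $T_e$ simultaneously, and only then may $b_0$ be selected large enough (depending on $\epsilon$) to push the codegree-to-degree ratio and the Pippenger uncovered fraction below $\eta$. Once these dependencies are tracked and the codegree estimate is verified in each of the cases of how two $H$-edges may overlap (in zero or one vertex), the proof is a mechanical application of the nibble.
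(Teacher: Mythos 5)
The paper does not prove this lemma; it is cited verbatim as Lemma~5 of Haxell and R\"odl \cite{HaxRod}, so there is no in-paper proof to compare against. That said, your nibble sketch is essentially the argument one would give (and, modulo details, is close in spirit to Haxell--R\"odl's own approach): form the $|E(F)|$-uniform auxiliary hypergraph on vertex set $E(H)$ whose hyperedges are the edge sets of partition-respecting copies of $F$, show it is nearly regular with small codegree, and invoke Pippenger--Frankl--R\"odl to extract a near-perfect matching, which translates back into an $F$-packing of $H$ with small leftover. Your codegree estimate is correct: two distinct $H$-edges that both lie in a partition-respecting copy of $F$ pin at least three of the $|V(F)|$ image vertices (they cannot lie in the same bipartite pair, and if their classes overlap in one index the shared image vertex is forced), so the codegree is $O(b^{|V(F)|-3})$ against a degree of order $b^{|V(F)|-2}$.

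One step as written is too strong: $\epsilon$-regularity does not guarantee that \emph{every} edge $e$ lies in $(1\pm o_\epsilon(1))T$ partition-respecting copies of $F$. A vertex of an $\epsilon$-regular pair may still have atypically small degree into the opposite class (only vertex subsets of size at least $\epsilon b$ are controlled), and an edge incident to such a vertex can have far fewer extensions than the typical count. The extension/counting lemma gives your degree formula only for all but an $O_\epsilon(1)$-fraction of the edges. This does not sink the argument, but it must be handled explicitly: either discard the exceptional edges (there are at most $O(\epsilon)|E(H)|$ of them, which is absorbed into the $\eta|E(H)|$ budget once $\epsilon \ll \eta$) and run the nibble on the remaining nearly regular hypergraph, or invoke a version of Pippenger's theorem that tolerates a small fraction of vertices with atypical degree. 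With that repair, together with the quantifier discipline you already describe ($\epsilon$ chosen in terms of $F$, $\eta$, $\delta_0$; then $b_0$ chosen in terms of $\epsilon$), the sketch becomes a complete proof.
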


By Lemma~\ref{L:HaxRodFinalDecomp}, it will suffice to find approximate decompositions into $(F,b,\geq\delta_0,\epsilon)$-graphs. Thus our main goal in this section will be to prove the following lemma.

\begin{lemma}\label{L:initialDecomp}
Let $F$ be a graph, let $\mathcal{W}$ be a set of condensations of $F$, and let $\eta$ be a positive real number. There are positive reals $\delta_0 \coloneqq\delta_0(F,\eta)$ and $\epsilon_0 \coloneqq \epsilon_0(F,\eta)$ such that the following holds. For each positive real $\epsilon<\epsilon_0$, there are integers $n_0 \coloneqq n_0(F,\eta,\epsilon)$ and $t_0 \coloneqq t_0(F,\eta,\epsilon)$ such that each graph $G$ with $n>n_0$ vertices that has a fractional $\mathcal{W}$-decomposition also has, for some $b \geq \frac{n}{t_0}$, an $\eta$-approximate decomposition with $(F,b,\geq\delta_0,\epsilon)$-graphs.
\end{lemma}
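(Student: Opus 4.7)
The plan is to follow the regularity-based approach of Haxell-R\"odl \cite{HaxRod} and Yuster \cite{Yus2005}, adapted to the case where the object of decomposition is the weighted condensation $W$ rather than $F$ itself.

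The first step is to apply Szemer\'edi's regularity lemma to $G$ with a regularity parameter $\epsilon'$ and density threshold $\delta_0'$ each chosen much smaller than $\epsilon$, $\eta$, and $\delta_0$. This yields an exceptional set $V_0$ with $|V_0| \leq \epsilon' n$ and equal-sized clusters $V_1, \ldots, V_t$ of size $B$; the constant $t_0$ is chosen so that $B \geq n/t_0$. Discarding the edges incident to $V_0$, the edges in non-$\epsilon'$-regular pairs, and the edges in pairs of density below $\delta_0'$ loses fewer than $\eta n^2/3$ edges. Let $R$ be the weighted reduced graph on $\{1, \ldots, t\}$ in which each surviving regular pair $(V_i, V_j)$ contributes an edge of weight $d(V_i, V_j)$.

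The core technical step is to transfer the given fractional $W$-decomposition $\mathcal{G}^*$ of $G$ to a fractional $W$-packing of $R$ via an averaging argument. For each injective $\phi \colon V(W) \to \{1, \ldots, t\}$, the scale factors of the scaled $W$-copies in $\mathcal{G}^*$ whose $j$-th vertex lies in $V_{\phi(j)}$ for all $j$ aggregate, after normalising by $B^2$, to the weight of a scaled $W$-copy sitting on $(V_{\phi(1)}, \ldots, V_{\phi(p)})$ in $R$. Because $\mathcal{G}^*$ covers each edge of $G$ with total weight exactly $1$, the induced packing of $R$ covers all of its edge weight apart from the amount attributable to discarded edges and to scaled copies touching $V_0$. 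A probabilistic rounding argument in the style of \cite{Yus2005} then produces a multiset of scaled $W$-copies in $R$ whose total contribution covers $R$'s weight to within $\eta/3$.

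Each scaled $W$-copy in this multiset, sitting on clusters $V_{i_1}, \ldots, V_{i_p}$, is lifted to one or more $(F, b, \geq \delta_0, \epsilon)$-graphs in $G$ by splitting each $V_{i_j}$ into $|U_j|$ disjoint sub-clusters of size $b = B / \max_j |U_j|$, one per vertex of $F$ in the condensation part $U_j$. A standard slicing argument shows that the resulting sub-bipartite graphs inherit $\epsilon$-regularity (with any worsening absorbed into $\epsilon$) and density close to that of the parent pair, so that the sub-clusters, joined via the edges of $F$, form the required $(F, b, \geq \delta_0, \epsilon)$-graph. By cycling through different sub-cluster assignments in a balanced design, the $(F, b, \geq \delta_0, \epsilon)$-graphs obtained from the various scaled $W$-copies form an edge-disjoint family in $G$ covering all but at most $\eta n^2$ edges. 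The main obstacle is the transfer-and-rounding step: one must carefully control the weight lost to exceptional vertices and non-regular pairs when averaging $\mathcal{G}^*$ down to a fractional packing of $R$, and adapt the concentration arguments of \cite{HaxRod} and \cite{Yus2005} to the weighted setting so that the integer packing of $R$, combined with the balanced sub-cluster design, yields a genuinely edge-disjoint family of $(F, b, \geq \delta_0, \epsilon)$-graphs in $G$ with the desired coverage.
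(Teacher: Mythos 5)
Your overall framework---regularity lemma, weighted reduced graph, transferring the fractional decomposition down to the reduced graph, then lifting---matches the paper's strategy, but there is a genuine gap at exactly the point the paper identifies as its one new idea, and a second gap at the density-matching step.

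The first gap: you never explain how to convert a fractional packing of $R$ by scaled copies of the \emph{weighted} graph $W$ into a collection of structures that correspond to copies of $F$. You propose to ``split each $V_{i_j}$ into $|U_j|$ sub-clusters, one per vertex of $F$'' and then ``cycle through different sub-cluster assignments in a balanced design,'' but this is precisely the non-trivial thing that has to be proved. A scaled $W$-copy carries weight $\alpha\,w_W(ij)$ on an edge $ij$, where $w_W(ij)=|E_F(U_i,U_j)|$ is arbitrary, and you must account for all of this weight by $F$-copies sitting on the sub-clusters. The paper isolates this as Lemma~\ref{L:blowUpDecomp}: the $q$-blow up of $W$ (with $q=\max_i|U_i|$) has a fractional $F$-decomposition, proved by averaging over all partition-respecting injections of $V(F)$ into the blown-up vertex set. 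Without this, your ``balanced design'' is a hand-wave; in particular, you would need to verify that averaging over sub-cluster assignments really does saturate each blown-up edge with the correct total weight, and the required symmetry argument is exactly the content of Lemma~\ref{L:blowUpDecomp}.

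The second gap is the uniform-density requirement. An $(F,b,\delta,\epsilon)$-graph requires \emph{every} regular pair to have density in $\delta\pm\epsilon$ for the \emph{same} $\delta$. If you simply slice each cluster $V_{i_j}$ into $|U_j|$ sub-clusters and take the inherited bipartite graphs, the densities vary from pair to pair, following the densities of the original cluster pairs in $G$. Moreover, when two scaled $W$-copies in your rounded multiset share an edge of $R$, their lifted $F$-structures will overlap in the same bipartite graph unless you carve it into disjoint pieces of prescribed densities. The paper handles both issues via Lemma~\ref{L:colourRegularPair}: one keeps the fractional $F$-decomposition of the blow-up $Q$ fully fractional (no rounding to a multiset), trims it via Lemma~\ref{L:HaxRodTauBounded} so every scaled $F$-copy has edge weight at least $2\delta_0$ (this is also where the $\geq\delta_0$ lower bound in ``$(F,b,\geq\delta_0,\epsilon)$-graph'' comes from, which you never address), and then slices each regular bipartite graph into spanning subgraphs of densities $\delta^h=(1-\epsilon_2)w_h$, one per scaled $F$-copy $Q_h$ through that pair. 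Your ``probabilistic rounding in the style of Yuster'' is also underspecified in the weighted setting: Yuster rounds an $F$-packing of an unweighted reduced graph, where each selected copy contributes a unit; it is unclear what it means to round to a ``multiset of \emph{scaled} $W$-copies,'' and the paper avoids this issue entirely by deferring discretisation to the colouring step.
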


A result along the lines of Lemma~\ref{L:initialDecomp} is implicitly proved (but not directly stated) in \cite{HaxRod} and \cite{Yus2005}. In those results, however, the hypothesis that $G$ has a fractional $W$-decomposition is replaced by the hypothesis that $G$ has a fractional $F$-decomposition. Our proof of Lemma~\ref{L:initialDecomp} proceeds along similar lines to the proofs in \cite{HaxRod} and \cite{Yus2005}. The only substantially new idea we require is encapsulated in Lemma~\ref{L:blowUpDecomp} below.

For a weighted graph $W$ and a positive integer $q$, the \emph{$q$-blow up} of $W$ is defined to be the weighted graph $Q$ with $V(Q)=V(W) \times \{1,\ldots,q\}$ in which, for all $i,j \in \{1,\ldots,q\}$, the edge $\{(x,i),(y,j)\}$ is present and has weight $w_W(xy)$ if $xy \in E(W)$ and is absent otherwise. So, informally, $Q$ is obtained from $W$ by blowing up each vertex into $q$ vertices and each edge into a scaled copy of $K_{q,q}$ each of whose edges has the weight of the original edge.

\begin{lemma}\label{L:blowUpDecomp}
Let $F$ be a graph, let $W$ be the condensation of $F$ with respect to a proper $k$-colouring with colour classes $\{U_1,\ldots,U_k\}$, and let $q \geq \max\{|U_1|,\ldots,|U_k|\}$. The $q$-blow up of $W$ has a fractional $F$-decomposition.
\end{lemma}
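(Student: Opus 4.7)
The plan is to construct a fractional $F$-decomposition of $Q$ explicitly, by averaging over all partition-respecting injective embeddings of $F$ into $Q$, each with the same scalar weight $\alpha>0$. Since $|U_i|\leq q$ for each $i$, for every tuple of injections $\phi_i\colon U_i\to\{1,\ldots,q\}$ we may define a map $\phi\colon V(F)\to V(Q)$ by $\phi(v)\coloneqq(i,\phi_i(v))$ whenever $v\in U_i$. Any edge $uv$ of $F$ joins distinct parts $U_i,U_j$ (the $U_l$ are independent in $F$ by definition of condensation) and $\phi$ sends it to $\{(i,\phi_i(u)),(j,\phi_j(v))\}$, which is an edge of $Q$ because $ij\in E(W)$. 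Thus each $\phi(F)$ is an unweighted copy of $F$ sitting as a subgraph of $Q$. Let $\mathcal{P}$ denote the set of all such maps $\phi$.

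To verify the edge-weight equations, fix an edge $ij\in E(W)$ and observe that the coordinatewise action of $S_q^k$ on $V(Q)$ consists of automorphisms of $Q$ that permute $\mathcal{P}$, and is transitive on the $q^2$ edges of $Q$ joining part $i$ to part $j$. Hence every such edge of $Q$ accumulates the same total weight from the family $\mathcal{P}$. Each individual $\phi\in\mathcal{P}$ meanwhile contributes exactly one unit of weight to each of $w_W(ij)$ edges of $Q$ between parts $i$ and $j$, namely the images of the $w_W(ij)$ edges of $F$ between $U_i$ and $U_j$; these images are distinct because each $\phi_l$ is injective and $F$ is simple. Averaging, each edge of $Q$ between parts $i$ and $j$ accumulates weight $\alpha\,|\mathcal{P}|\,w_W(ij)/q^2$ in total.

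The crux is that setting $\alpha\coloneqq q^2/|\mathcal{P}|$ makes this total equal to $w_W(ij)=w_Q(e)$ simultaneously for every edge $e$ of $Q$, because the required weight on an edge scales linearly with $w_W(ij)$ and so does the number of embeddings hitting that edge, with the same proportionality constant. This single cancellation is the only step I regard as non-obvious, and it is exactly what the uniform blow-up structure is designed to deliver: for a generic host graph, the analogous counts would not be homogeneous across edges. The remaining verifications (that each $\phi(F)$ lies in $Q$, that a single $\phi$ cannot collapse two edges of $F$ onto one edge of $Q$, and the transitivity of the $S_q^k$-action) are routine symmetry and counting, and I anticipate no further obstacle.
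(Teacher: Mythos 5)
Your proof is correct and is essentially the same argument as the paper's: both average uniformly over the set of partition-respecting injective embeddings of $F$ into the $q$-blow-up and use the $S_q^k$-symmetry of $Q$ to show that each edge between parts $i$ and $i'$ receives the same aggregate weight $|\mathcal{P}|\,w_W(ii')/q^2$. The only difference is presentational — you fix the uniform scale $\alpha=q^2/|\mathcal{P}|$ up front and check the decomposition condition, whereas the paper defines edge-dependent weights $\delta_e=w_Q(e)/|A_e|$ so that the decomposition condition is immediate and then uses the same symmetry to show $\delta_e$ is in fact constant.
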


\begin{proof}
Let $Q$ be the $q$-blow up of $W$ and note that $V(Q)=\{1,\ldots,k\} \times \{1,\ldots,q\}$. Say an injection $a$ from $V(F)$ to $V(Q)$ is \emph{partition respecting} if, for all $i \in \{1,\ldots,k\}$ and $x \in U_i$, we have $a(x) = (i,j)$ for some $j \in \{1,\ldots,q\}$. Let $A$ be the set of all partition respecting injections and note that $A$ is nonempty since $q \geq \max\{|U_1|,\ldots,|U_k|\}$.

For each $a \in A$, let $Q_a$ be the image of $F$ under $a$ with scale factor $\frac{1}{|A|}q^2$ and let $\mathcal{Q}^*=\{Q_a:a\in A\}$. Obviously each graph in $\mathcal{Q}^*$ is a weighted subgraph of $Q$. Let $ii'$ be an arbitrary edge of $W$ and let $E_{ii'}$ be the subset $\{\{(i,j),(i',j')\}:j,j'\in\{1,\ldots,q\}\}$ of $E(Q)$. There are $w_W(ii')$ edges of $F$ between $U_i$ and $U_{i'}$ and thus
\[\sum_{e \in E_{ii'}}\sum_{Q_a \in \mathcal{Q}^*}w_{Q_a}(e)=\sum_{a \in A}\mfrac{q^2}{|A|}w_W(ii')=q^2w_W(ii').\]
Since $|E_{ii'}|=q^2$, we have by symmetry that $\sum_{Q_a \in \mathcal{Q^*}}w_{Q_a}(e)=w_W(ii')=w_Q(e)$ for each $e \in E_{ii'}$. So $\mathcal{Q}^*$ is a fractional $F$-decomposition of $Q$.
\end{proof}

In order to prove Lemma~\ref{L:initialDecomp} we introduce some tools from elsewhere. A partition is \emph{equitable} if any two of its parts differ in size by at most 1. We use a version of the regularity lemma \cite{Sze}. The form below follows easily from \cite[Chapter IV, Theorem $29'$]{Bol}, for example.

\begin{lemma}\label{L:regularity}
Let $\epsilon>0$ be a real number and $m_0$ be an integer. There is an integer $m_1 \coloneqq m_1(\epsilon,m_0)$ such that, for every graph $G$ of order $n>m_1$, there is an equitable partition $\{V_1,\ldots,V_m\}$ of $V(G)$ such that $m_0 \leq m \leq m_1$ and $\sum_{\{i,j\}\in I} |V_i||V_j| \leq \epsilon n^2$ where $I$ is the set of all $2$-subsets $\{i,j\}$ of $\{1,\ldots,m\}$ for which $G[V_i,V_j]$ is not $\epsilon$-regular.
\end{lemma}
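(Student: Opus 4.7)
The plan is to derive this from the standard form of Szemer\'edi's regularity lemma, which asserts that for every real $\epsilon'>0$ and integer $m_0' \geq 1$ there exists $M \coloneqq M(\epsilon',m_0')$ such that every graph on at least $M$ vertices admits an equitable partition $\{V_1,\ldots,V_m\}$ with $m_0' \leq m \leq M$ in which at most $\epsilon'\binom{m}{2}$ of the pairs $\{V_i,V_j\}$ fail to be $\epsilon'$-regular. The only work is to convert the pair count of irregular pairs into the weighted sum $\sum_{\{i,j\}\in I}|V_i||V_j|$ using that the parts are of nearly equal size.

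First I would apply the standard lemma with parameter $\epsilon' \coloneqq \epsilon/4$ and with the same lower bound $m_0$ on the number of parts, obtaining an equitable partition $\{V_1,\ldots,V_m\}$ with $m_0 \leq m \leq M(\epsilon/4,m_0)$ and with at most $\tfrac{\epsilon}{4}\binom{m}{2} \leq \tfrac{\epsilon}{8}m^2$ pairs that fail to be $(\epsilon/4)$-regular. Since every $(\epsilon/4)$-regular pair is also $\epsilon$-regular, the set $I$ of pairs $\{i,j\}$ for which $G[V_i,V_j]$ is not $\epsilon$-regular satisfies $|I| \leq \tfrac{\epsilon}{8}m^2$.

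Next I would exploit equitability to convert this pair count into the weighted bound. Because the partition is equitable, $|V_i| \leq \lceil n/m\rceil \leq n/m + 1$ for every $i$, so $|V_i||V_j| \leq (n/m+1)^2$. Setting $m_1 \coloneqq M(\epsilon/4,m_0)$ and requiring $n > m_1$ (so in particular $m \leq m_1 \leq n$), we get
\[
\sum_{\{i,j\}\in I}|V_i||V_j| \;\leq\; \tfrac{\epsilon}{8}m^2\left(\tfrac{n}{m}+1\right)^2 \;=\; \tfrac{\epsilon}{8}(n+m)^2 \;\leq\; \tfrac{\epsilon}{8}(2n)^2 \;=\; \tfrac{\epsilon}{2}n^2 \;\leq\; \epsilon n^2,
\]
as required. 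Since the argument is a mechanical repackaging of the standard regularity lemma, there is no genuine obstacle; no step beyond this elementary reduction is needed.
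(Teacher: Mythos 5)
Your proof is correct. The paper itself does not prove this lemma---it is stated without proof as ``the following form of the regularity lemma''---so there is no paper argument to compare against; the intended justification is exactly the kind of routine reduction from Szemer\'edi's regularity lemma (pair-count form, no exceptional set) that you give. Your choice $\epsilon' = \epsilon/4$, the observation that $(\epsilon/4)$-regularity implies $\epsilon$-regularity, the bound $|V_i|\le n/m+1$ from equitability, and the final estimate $\tfrac{\epsilon}{8}(n+m)^2 \le \tfrac{\epsilon}{2}n^2$ using $m \le m_1 < n$ all check out. One could even take $\epsilon'=\epsilon$ at the cost of also demanding $n$ be a larger multiple of $m_1$, but your cleaner $\epsilon/4$ trick avoids that bookkeeping and is entirely standard.
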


An $\epsilon$-regular bipartite graph can be decomposed into subgraphs that retain the regularity property. This can be established using a random partition. The following formulation is from \cite{GirGraKuhOst}. When we write $0 < a \ll b \ll c \leq 1$ in a statement or argument, we mean that there exist non-decreasing functions $f : (0, 1] \rightarrow (0, 1]$ and $g: (0, 1] \rightarrow (0, 1]$ such that the statement or argument holds for all $0 < a, b, c \leq 1$ satisfying $b \leq f(c)$ and $a \leq g(b)$. Hierarchies with more constants are defined similarly.

\begin{lemma}[{\cite[Lemma 4.8]{GirGraKuhOst}}]\label{L:colourRegularPair}
Assume $0 < \frac{1}{b} \ll \epsilon \ll \delta_1,\ldots,\delta_\ell \leq \delta \leq 1$ and suppose that $\sum_{i=1}^\ell \delta_i \leq \delta$. Let $G$ be an $(\epsilon,\delta)$-regular bipartite graph with partition $\{U,V\}$ where $|U|=|V|=b$. Then $G$ can be decomposed into spanning subgraphs $G_0,\ldots,G_\ell$ such that $G_0$ is empty if $\sum_{i=1}^\ell \delta_i = \delta$ and $G_i$ is $(\epsilon^{1/12},\delta_i \pm \epsilon^{1/12})$-regular for each $i \in \{1,\ldots,\ell\}$.
\end{lemma}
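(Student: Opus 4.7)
The plan is to construct the decomposition via an independent random edge-coloring. For each edge of $G$, assign color $i \in \{1,\ldots,\ell\}$ with probability $p_i \coloneqq \delta_i/\delta$ and color $0$ with probability $p_0 \coloneqq 1 - \sum_{i=1}^\ell \delta_i/\delta$; let $G_j$ be the spanning subgraph formed by the color-$j$ edges. The hypothesis $\sum_{i=1}^\ell \delta_i \leq \delta$ ensures $p_0 \geq 0$, and $p_0 = 0$ precisely when $\sum_{i=1}^\ell \delta_i = \delta$, so in that case $G_0$ deterministically receives no edges and is empty. By construction $\mathbb{E}[d(G_i[X,Y])] = p_i\, d(G[X,Y])$ for every $X \subseteq U$ and $Y \subseteq V$, and in particular $\mathbb{E}[d(G_i[U,V])] = (\delta_i/\delta)\,d(G[U,V])$, which differs from $\delta_i$ by at most $\delta_i\epsilon/\delta \leq \epsilon$ since $d(G[U,V]) \in [\delta-\epsilon,\delta+\epsilon]$.

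To verify regularity, fix $i \in \{1,\ldots,\ell\}$ and consider $X \subseteq U$, $Y \subseteq V$ with $|X|,|Y| \geq \epsilon^{1/12} b$. The quantity $|E(G_i[X,Y])|$ is a sum of $|E(G[X,Y])|$ independent Bernoulli$(p_i)$ variables, so Hoeffding's inequality yields
\[
\Pr\!\left[\bigl|d(G_i[X,Y]) - p_i\, d(G[X,Y])\bigr| \geq \tfrac{1}{3}\epsilon^{1/12}\right] \leq 2\exp\!\bigl(-\tfrac{2}{9}\epsilon^{1/6}|X||Y|\bigr) \leq 2\exp\!\bigl(-\tfrac{2}{9}\epsilon^{1/3} b^2\bigr).
\]
There are at most $(\ell+1)\cdot 4^b$ choices of $(i,X,Y)$, and since $1/b \ll \epsilon$ the concentration factor dominates $4^b$ comfortably. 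A union bound therefore yields positive probability (indeed $1-o(1)$) that the above deviation bound holds simultaneously for every such triple; fix a coloring witnessing this event.

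For this coloring, the $\epsilon$-regularity of $G$ gives $|d(G[X,Y]) - d(G[U,V])| \leq \epsilon$ whenever $|X|,|Y| \geq \epsilon^{1/12}b \geq \epsilon b$, and combining this with the Hoeffding deviation bound and $|p_i\, d(G[U,V]) - \delta_i| \leq \epsilon$ one obtains
\[
|d(G_i[X,Y]) - \delta_i| \;\leq\; \tfrac{1}{3}\epsilon^{1/12} + p_i\epsilon + \epsilon \;\leq\; \tfrac{1}{2}\epsilon^{1/12}
\]
for all admissible $X,Y$ (including $U,V$). Specialising to $(X,Y)=(U,V)$ gives the density condition $d(G_i[U,V]) = \delta_i \pm \epsilon^{1/12}$, and the triangle inequality across two admissible pairs yields $|d(G_i[X,Y]) - d(G_i[U,V])| \leq \epsilon^{1/12}$, establishing $(\epsilon^{1/12}, \delta_i \pm \epsilon^{1/12})$-regularity of each $G_i$. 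The main obstacle is purely quantitative bookkeeping: $\epsilon^{1/12}$ must simultaneously absorb the $O(\epsilon)$ regularity error from $G$, the Hoeffding fluctuation, and the $4^b$ counting factor in the union bound. The exponent $1/12$ is tuned so that $\epsilon^{1/6}\cdot \epsilon^{2/12} = \epsilon^{1/3}$ in the concentration exponent dominates the $b\log 4$ from the union bound under the hierarchy $1/b \ll \epsilon$; no structural idea beyond the random coloring is needed.
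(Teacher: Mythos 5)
The paper does not prove this lemma; it is quoted verbatim from \cite[Lemma~4.8]{GirGraKuhOst}, and the surrounding text explicitly notes that it ``can be established using a random partition.'' Your proposal supplies exactly that standard argument --- independent edge-coloring with probabilities $\delta_i/\delta$, Hoeffding concentration for the density of each colour class on every pair $X\subseteq U$, $Y\subseteq V$ of size at least $\epsilon^{1/12}b$, and a union bound over the at most $(\ell+1)4^b$ such pairs using $1/b\ll\epsilon$ --- and the computations (in particular the reduction from the $\epsilon$-regularity and expectation errors to a $\tfrac12\epsilon^{1/12}$ slack, then the triangle inequality via $(U,V)$) are correct.
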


We also use the following result from \cite{HaxRod}.

\begin{lemma}[{\cite[Lemma 7]{HaxRod}}]\label{L:HaxRodTauBounded}
Let $F$ be a graph and $\eta$ be a positive real number. There exists a positive real $\delta_0 \coloneqq \delta_0(F,\eta)$ such that, for any weighted graph $Q$ in which each edge has weight at most $1$,
if $Q$ has a fractional $F$-decomposition, then it also has an $\eta$-approximate fractional $F$-decomposition in which the weight of each edge in each copy of $F$ is at least $\delta_0$.
\end{lemma}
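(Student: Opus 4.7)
The plan is to set up a linear program whose feasible solutions are fractional $F$-packings of $Q$, take a basic optimal solution (necessarily a fractional $F$-decomposition that uses few distinct copies), and then discard those copies whose scale factor is too small.

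Set $f:=|E(F)|$, $n:=|V(Q)|$, and $\delta_0:=2\eta/f$, a quantity depending only on $F$ and $\eta$. Let $\Phi$ be the (finite) set of copies of $F$ contained in the underlying graph of $Q$. For each $\phi\in\Phi$ introduce a variable $\alpha_\phi\ge 0$ and consider the linear program
\[
\text{maximize } \sum_{\phi\in\Phi}\alpha_\phi \quad \text{subject to} \quad \sum_{\phi\,:\,e\in E(\phi)}\alpha_\phi \le w_Q(e) \ \text{ for each } e\in E(Q).
\]
Summing the $|E(Q)|$ inequality constraints yields $f\sum_\phi \alpha_\phi\le \Vert Q\Vert$, so the objective is at most $\Vert Q\Vert/f$. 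Since $Q$ admits a fractional $F$-decomposition by hypothesis, this bound is attained, and at any maximizer every edge-constraint must be tight, so any maximizer is itself a fractional $F$-decomposition of $Q$. The feasible region is bounded (each $\alpha_\phi\le \max_e w_Q(e)\le 1$), so an optimal basic solution $\alpha^*$ exists, and standard LP theory guarantees that the support of $\alpha^*$ has size at most the number of inequality constraints, namely at most $|E(Q)|\le n^2/2$.

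Now retain only those copies with $\alpha^*_\phi\ge\delta_0$. The remaining collection is a fractional $F$-packing of $Q$ in which every scaled copy, and hence the weight of every edge in every copy, is at least $\delta_0$. The total weight discarded is bounded by
\[
\sum_{\phi\,:\,0<\alpha^*_\phi<\delta_0}\alpha^*_\phi\,f \ \le\ |E(Q)|\cdot \delta_0\,f \ \le\ \mfrac{n^2}{2}\cdot 2\eta \ =\ \eta n^2,
\]
so the pruned collection has total weight at least $\Vert Q\Vert-\eta n^2$, which is precisely the required $\eta$-approximate fractional $F$-decomposition with minimum edge weight $\delta_0$.

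The argument is short and has no substantive obstacle; its content is the LP fact that a basic feasible solution uses at most as many variables as there are inequality constraints. The only bookkeeping subtlety is that the paper's definition of a fractional decomposition permits distinct superimposed scaled copies with different scale factors to appear separately, so before invoking the LP we merge such copies into a single variable $\alpha_\phi$ per subgraph-copy of $F$ (a step the paper explicitly allows).
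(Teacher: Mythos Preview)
The paper does not give its own proof of this lemma; it is quoted verbatim from \cite[Lemma~7]{HaxRod}. Your argument is correct and is essentially the standard proof from that source: pass to a basic optimal solution of the natural packing LP so that at most $|E(Q)|$ scale factors are nonzero, then discard the copies whose scale is below $\delta_0=2\eta/f$, losing total weight at most $|E(Q)|\cdot\delta_0 f\le \eta n^2$.

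One minor remark: you invoke the edge-weight-at-most-$1$ hypothesis only to argue the feasible region is bounded, but this is not really needed---the polyhedron is pointed (all variables are nonnegative) and the objective is bounded above by $\Vert Q\Vert/f$, so an optimal basic feasible solution exists regardless. The hypothesis is harmless, of course, and your use of it is valid.
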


We are now in a position to prove Lemma~\ref{L:initialDecomp}.

\begin{proof}[\textup{\textbf{Proof of Lemma~\ref{L:initialDecomp}}}]
By Lemma~\ref{L:HaxRodTauBounded}, take $\delta_0 \coloneqq \delta_0(F,\eta)$ such that every weighted graph that has a fractional $F$-decomposition also has an $\frac{\eta}{4}$-approximate fractional $F$-decomposition in which the weight of each edge of each copy of $F$ is at least $2\delta_0$. Let $q=|V(F)|$. Take $\epsilon_0 \coloneqq \frac{1}{8}\eta\delta_0$ and let $\epsilon< \epsilon_0$ be a positive real. Take $\epsilon_1$ and $\epsilon_2$ to be positive reals such that $0 < \epsilon_1 \ll \epsilon_2 \ll \epsilon <1$. Let $m_1 \coloneqq m_1(\epsilon_1,\lceil\frac{8}{\eta}q^2\rceil)$ be as given by Lemma~\ref{L:regularity} and let $t_0 \coloneq \lceil \frac{8}{\eta}q^3m_1 \rceil$. Finally take $G$ to be a graph of order $n$ where $n$ is large relative to $t_0$ and $\frac{1}{\epsilon_2}$, and assume there is a fractional $\mathcal{W}$-decomposition $\mathcal{G}^*$ of $G$.

By Lemma~\ref{L:regularity} there is an equitable partition $\{V'_1,\ldots,V'_{m'}\}$ of $V(G)$ such that $\frac{1}{\eta}8q^2 \leq m' \leq m_1$ and $\sum_{\{i,j\}\in I} |V'_i||V'_j| \leq \epsilon_1 n^2$ where $I$ is the set of all 2-subsets $\{i,j\}$ of $\{1,\ldots,m'\}$ for which $G[V'_i,V'_j]$ is not $\epsilon_1$-regular. Let $b=\lfloor \frac{\eta n}{4q^3m'} \rfloor$ and note that $b > n/t_0$. Our first step is to refine the partition $\{V'_1,\ldots,V'_{m'}\}$ into a new one with more desirable properties. We claim we can find a partition $\{V_0,\ldots,V_{m}\}$ of $V(G)$ such that
\begin{itemize}[nosep]
    \item
$|V_1|=\cdots=|V_{m}|=bq$ and $|V_0|<bqm'$;
    \item
for each $j \in \{1,\ldots,m'\}$, $|V'_j \cap V_0| <bq$ and $V'_j \setminus V_0$ is a union of classes in $\{V_1,\ldots,V_{m}\}$;
    \item
the sum of the weights of the weighted graphs in $\mathcal{G}^*$ that have two or more vertices in any part in $\{V_1,\ldots,V_{m}\}$ is at most $\frac{\eta}{8}n^2$.
\end{itemize}
Consider randomly partitioning each part in $\{V'_1,\ldots,V'_{m'}\}$ into some number of parts of size $bq$ and one (possibly trivial) part of size less than $bq$. Let $V_1,\ldots,V_{m}$ be the resultant parts of size $bq$ and $V_0$ be the union of all the parts of size less than $bq$. Clearly $|V_0|<bqm'$. Since $|V'_i| \geq \lfloor\frac{n}{m'}\rfloor$ for each $i \in \{1,\ldots,m'\}$ the probability that a given pair of vertices lie together in a part in $\{V_1,\ldots,V_{m}\}$ is certainly less than $bq/\lfloor\frac{n}{m'}\rfloor \leq \frac{2}{n}bqm'$ (using $n \gg m'$). Thus, using the union bound, the probability that a given weighted graph in $\mathcal{G}^*$ has two or more vertices in any part in $\{V_1,\ldots,V_{m}\}$ is less than
\[\mfrac{2bqm'}{n}\mbinom{|V(W)|}{2} < \mfrac{bq^3m'}{n} \leq \mfrac{\eta}{4}\]
using the definition of $b$. Thus the expected sum of the weights of the weighted graphs in $\mathcal{G}^*$ that have two or more vertices in any part in $\{V_1,\ldots,V_{m}\}$ is less than $\frac{\eta}{4}|E(G)| < \frac{\eta}{8}n^2$.
So our claim follows.

Our next step is to restrict our attention to only those weighted graphs in $\mathcal{G}^{*}$ for which every edge is in an $\epsilon_2$-regular bipartite graph and show that, in doing so, we do not lose too much weight. Let $\mathcal{G}^{**}$ be the fractional packing of $G$ that contains every weighted graph in $\mathcal{G}^*$ except those which:
\begin{itemize}[nosep]
    \item
have two or more vertices in some part in $\{V_1,\ldots,V_{m}\}$, or
    \item
have an edge in a bipartite graph $G[V_i,V_j]$ that is not $\epsilon_2$-regular.
\end{itemize}
We may assume $\epsilon_2 > \frac{8}{\eta}q^2\epsilon_1$. For all distinct $i,j \in \{0,\ldots,m\}$, if $V_i$ and $V_j$ are subsets of distinct parts of an $\epsilon_1$-regular bipartite graph then, by the definition of $\epsilon_1$-regular, $G[V_i,V_j]$ is $(\frac{1}{bq}\lceil\frac{n}{m'}\rceil\epsilon_1)$-regular. Using the definition of $b$, we have $\frac{1}{bq}\lceil\frac{n}{m'}\rceil\epsilon_1 <  \frac{8}{\eta}q^2\epsilon_1 < \epsilon _2$. Thus $G[V_i,V_j]$ is $\epsilon_2$-regular unless one of the following holds.
\begin{itemize}[nosep]
    \item
$0 \in \{i,j\}$. At most $bqm'n$ edges can be in such graphs.
    \item
$V_i \cup V_j \subseteq V'_h$ for some $h \in \{1,\ldots,m'\}$. At most $m'\binom{\lceil n/m' \rceil}{2}$ edges can be in such graphs.
    \item
$V_i \subseteq V'_g$ and $V_j \subseteq V'_h$ for some distinct $g,h \in \{1,\ldots,m'\}$ such that $G[V'_g,V'_h]$ is not $\epsilon_1$-regular. At most $\epsilon_1 n^2$ edges can be in such graphs.
\end{itemize}
So the number of edges in graphs $G[V_i,V_j]$ that are not $\epsilon_2$-regular is at most
\[\epsilon_1 n^2+m'\mbinom{\lceil n/m' \rceil}{2}+bqm'n \leq \left(\epsilon_1+\mfrac{1}{m'}+\mfrac{\eta}{4q^2}\right)n^2 \leq \mfrac{\eta}{2q^2}n^2,\]
using $\epsilon_1,\frac{1}{m'} < \frac{\eta}{8q^2}$. The sum of the weights of graphs in $\mathcal{G}^{*}$ that use these edges can be at most $|E(F)|$ times this quantity, and $|E(F)| \leq \binom{q}{2}$. Hence,
\begin{equation}\label{E:GDoubleStar}
\Vert\mathcal{G}^{**}\Vert > |E(G)|-\mfrac{\eta}{8}n^2-\mbinom{q}{2}\mfrac{\eta}{2q^2}n^2 > |E(G)|-\mfrac{\eta}{2}n^2.
\end{equation}

We will now define a weighted graph that records the `density' of weight used by weighted graphs in $\mathcal{G}^{**}$ between the partition classes in $\{V_1,\ldots,V_{m}\}$ and thus find a fractional $\mathcal{W}$-decomposition of this graph by amalgamating $\mathcal{G}^{**}$ in the natural way. We can then use Lemma~\ref{L:blowUpDecomp} to find a fractional $F$-decomposition of a blow up of this graph. Let $R$ be the weighted graph with vertex set $\{1,\ldots,m\}$ in which edge $ij$ is absent if no graph in $\mathcal{G}^{**}$ contains an edge of $G[V_i,V_j]$ and otherwise $w_R(ij)$ is $\frac{1}{b^2q^2}$ times the sum of the total weight assigned to the edges in $E(G[V_i,V_j])$ by the weighted graphs in $\mathcal{G}^{**}$. Note that $\Vert R\Vert=\frac{1}{b^2q^2}\Vert\mathcal{G}^{**}\Vert$ and that, if $ij \in E(R)$, then $G[V_i,V_j]$ is $\epsilon_2$-regular and $ij$ has weight at most $d(G[V_i,V_j]) \leq  1$.  We define a fractional $\mathcal{W}$-decomposition $\mathcal{R}^*$ of $R$ as follows. For each $W' \in \mathcal{G}^{**}$ we add to $\mathcal{R}^*$ a weighted graph that is similar to $W'$ with scale factor $\frac{1}{b^2q^2}$ under an isomorphism that maps each $x \in V(W')$ to the $i \in \{1,\ldots,m\}$ such that $x \in V_i$.

By applying Lemma~\ref{L:blowUpDecomp} to each weighted graph in $\mathcal{R}^*$, we can obtain a fractional $F$-decomposition of the $q$-blow up $Q$ of $R$. Hence, by our definition of $\delta_0$, there is a fractional $F$-packing $\mathcal{Q}^*=\{Q_1,\ldots,Q_z\}$ of $Q$ such that each edge of each scaled copy of $F$ in $\mathcal{Q}^*$ has weight at least $2\delta_0$ and
\begin{equation}\label{E:QStar}
    \Vert\mathcal{Q}^*\Vert \geq \Vert Q \Vert -\mfrac{\eta}{4}(mq)^2 = q^2\Vert R \Vert-\mfrac{\eta}{4}(mq)^2=\mfrac{1}{b^2}\Vert \mathcal{G}^{**} \Vert-\mfrac{\eta}{4}(mq)^2.
\end{equation}

We complete the proof by refining the partition $\{V_1,\ldots,V_{m}\}$ in such a way that $Q$ approximately (but not exactly) represents the number of edges in $(q\epsilon_2)$-regular bipartite graphs between the refined classes. We will then be able to use Lemma~\ref{L:colourRegularPair} to convert $\mathcal{Q}^*$ into the $\eta$-approximate decomposition of $G$ with $(F,b,\geq\delta_0,\epsilon)$-graphs that we require. Subdivide each partition class $V_i \in \{V_1,\ldots,V_{m}\}$ into $q$ parts $V_{i,1},\ldots,V_{i,q}$, each of size $b$. Notice that if $G[V_i,V_{i'}]$ is $(\epsilon_2,\delta)$-regular, then $G[V_{i,j},V_{i',j'}]$ is $(q\epsilon_2,\delta\pm\epsilon_2)$-regular for all $j$ and $j'$. We now define a packing $\mathcal{G}=\{H_1,\ldots,H_z\}$ of $G$. For each edge $e=\{(i,j),(i',j')\}$ of $Q$ we do as follows. Let $C_{e}$ be the set of integers $h$ in $\{1,\ldots,z\}$ for which the weighted graph $Q_h$ contains edge $e$. Now use Lemma~\ref{L:colourRegularPair} to colour (some of) the edges of $G[V_{i,j},V_{i',j'}]$ with colours in $C_{e}$ in such a way that colour class $h$ induces an $(\epsilon,\delta_h\pm \epsilon)$-regular bipartite graph where $\delta_h=(1-\epsilon_2)w_h$ and $w_h$ is the weight of each edge of $Q_h$.
This is a valid application of Lemma~\ref{L:colourRegularPair} because
\[\sum_{h \in C_{e}}\delta_h \leq (1-\epsilon_2)w_{Q}(e) = (1-\epsilon_2)w_R(ii') \leq (1-\epsilon_2)d(G[V_{i},V_{i'}]) \leq d(G[V_{i,j},V_{i',j'}]).\]
Here we used, in order, the fact that $\mathcal{Q}^*$ is a packing of $Q$, the definition of $Q$, and the definition of $R$. For each $h \in \{1,\ldots, z\}$, let $H_h$ be the graph induced by the edges of colour $h$ and note that $H_h$ is a $(F,b,\delta_h,\epsilon)$-graph because $Q_h$ is a scaled copy of $F$ and, for all $\{(i,j),(i',j')\} \in E(Q_h)$, $H_h[V_{i,j},V_{i',j'}]$ is $(\epsilon,\delta_h\pm \epsilon)$-regular. In particular, since $w_h \geq 2\delta_0$, $H_h$ is a $(F,b,\geq\delta_0,\epsilon)$-graph.

It only remains to show that the graphs in $\mathcal{G}$ use sufficiently many edges. We have
\begin{multline}\label{E:G}
\Vert\mathcal{G}\Vert = \sum_{h=1}^z|E(H_h)| \geq
\sum_{h=1}^z b^2(\delta_h-\epsilon)|E(F)| \geq \left(1-\mfrac{\epsilon}{\delta_0}\right)b^2\sum_{h=1}^z\delta_h|E(F)|= \\\left(1-\mfrac{\epsilon}{\delta_0}\right)b^2\sum_{h=1}^z(1-\epsilon_2)\Vert Q_h \Vert = \left(1-\mfrac{\epsilon}{\delta_0}\right)(1-\epsilon_2) b^2\Vert \mathcal{Q}^* \Vert,
\end{multline}
where in the second inequality we used the fact that $\delta_h \geq \delta_0$ for each $h \in \{1,\ldots,z\}$.
Finally, we observe that, using \eqref{E:QStar} and \eqref{E:GDoubleStar},
\begin{equation}\label{E:bSquaredQStar}
b^2\Vert \mathcal{Q}^* \Vert \geq \Vert \mathcal{G}^{**} \Vert-\mfrac{\eta}{4}(bmq)^2 \geq \Vert \mathcal{G}^{**} \Vert-\mfrac{\eta}{4}n^2 > |E(G)|-\mfrac{3\eta}{4}n^2.
\end{equation}
Thus, from \eqref{E:G} and \eqref{E:bSquaredQStar}
\[\Vert\mathcal{G}\Vert \geq (1-\epsilon_2)\left(1-\mfrac{\epsilon}{\delta_0}\right)\left(|E(G)|-\mfrac{3\eta}{4}n^2\right) > \left(1-\mfrac{\eta}{4}\right)\left(|E(G)|-\mfrac{3\eta}{4}n^2\right) > |E(G)|-\eta n^2\]
where we used $\epsilon_2 < \epsilon < \frac{1}{8}{\eta\delta_0}$. This completes the proof.
\end{proof}

Theorem~\ref{T:approximateCycleDecomp} can be proved by combining Lemmas~\ref{L:HaxRodFinalDecomp} and \ref{L:initialDecomp}.

\begin{proof}[\textup{\textbf{Proof of Theorem~\ref{T:approximateCycleDecomp}}}]
Fix $F$ and $\eta$, and let $\mathcal{W}$ be a set of condensations of $F$. Let $\delta_0 \coloneqq\delta_0(F,\eta/2)$ and $\epsilon_0 \coloneqq \epsilon_0(F,\eta/2)$ be the constants given by  Lemma~\ref{L:initialDecomp}. Now let $\epsilon \coloneqq \epsilon(F,\eta/2,\delta_0)$ and $b_0 \coloneqq b_0(F,\eta/2,\delta_0)$ be the constants given by Lemma~\ref{L:HaxRodFinalDecomp} and let $\epsilon^*=\min\{\epsilon_0,\epsilon\}$. By Lemma~\ref{L:initialDecomp} there are integers $n_0 \coloneqq n_0(F,\eta/2,\epsilon^*)$ and $t_0 \coloneqq t_0(F,\eta/2,\epsilon^*)$ such that each graph $G$ with $n>n_0$ vertices that has a fractional $\mathcal{W}$-decomposition also has, for some $b \geq \frac{n}{t_0}$, a packing $\mathcal{G}$ with $(F,b,\geq\delta_0,\epsilon)$-graphs whose leftover has size at most $\frac{\eta}{2} n^2$. Assuming $n > \max\{n_0,b_0t_0\}$, then $b \geq b_0$. By Lemma~\ref{L:HaxRodFinalDecomp}, each graph $H \in \mathcal{G}$ has an $F$-packing whose leftover has size at most $\frac{\eta}{2}|E(H)|$. Taking the union of these packings produces an $F$-packing of $G$ whose leftover has size at most
\[\mfrac{\eta n^2}{2}+\sum_{H \in \mathcal{G}}\mfrac{\eta|E(H)|}{2} < \mfrac{\eta n^2}{2}+\mfrac{\eta n^2}{4} < \eta n^2. \qedhere\]
\end{proof}

\section{Decompositions into tripartite graphs}\label{S:tripartite}

In this section, we consider the more general case of $F$-decompositions for tripartite graphs $F$. Note that the decomposition thresholds of bipartite graphs are completely classified in \cite{GloKuhLoMonOst}: all are in $\{0,\frac{1}{2},\frac{2}{3}\}$. It is also shown in \cite{GloKuhLoMonOst} that $\delta_F \leq \delta_{K_{\chi(F)}}$ for any graph $F$ with chromatic number $\chi(F)$. So in particular $\delta_F \leq \delta_{K_{3}} \leq (7+\sqrt{21})/14 \lessapprox 0.827$ for each tripartite graph $F$ using the result of \cite{DelPos}.  We make use of the following result of \cite{GloKuhLoMonOst} which, like Theorem~\ref{T:approxToIntegral}, was proved using iterative absorption methods.

\begin{theorem}[\cite{GloKuhLoMonOst}]
\label{T:approxToIntegralGen}
Let $F$ be a graph with chromatic number $\chi$. Then $\delta_F \leq \max\{\delta_F^{0+},1-\frac{1}{\chi+1}\}$.
\end{theorem}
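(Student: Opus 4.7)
The plan is to apply the iterative absorption framework developed in \cite{BarKuhLoOst} and refined in the sequence of papers leading up to \cite{GloKuhLoMonOst}. Fix a small $\epsilon>0$ and set $\delta = \max\{\delta^{0+}_F, 1-\frac{1}{\chi+1}\}+\epsilon$. Let $G$ be an $F$-divisible graph on $n$ vertices with minimum degree at least $\delta n$. I would produce an $F$-decomposition of $G$ in three phases: build a vortex of nested vertex sets with an absorber sitting in the innermost set, iteratively apply approximate decomposition to peel off the successive layers, and finally absorb the small $F$-divisible leftover using the absorber.

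First, I would randomly sample a nested sequence $V(G) = U_0 \supset U_1 \supset \cdots \supset U_t$ with $|U_{i+1}|/|U_i|$ equal to a fixed small constant and $t = \Theta(\log n)$, using standard concentration to ensure that $G[U_i]$ inherits minimum degree $(\delta-o(1))|U_i|$ for every $i$. Inside $G[U_t]$ I would construct an \emph{absorber}: a small subgraph $A^* \subseteq G[U_t]$ with the property that, for every sufficiently sparse $F$-divisible graph $L$ on $V(A^*)$, the graph $A^* \cup L$ admits an $F$-decomposition. The hypothesis $\delta > 1-\frac{1}{\chi+1}$ is used here: it is the Hajnal--Szemer\'edi threshold for $K_{\chi+1}$-factors, so $G[U_t]$ contains a rich supply of $(\chi+1)$-cliques and hence of $F$-copies and $F$-transformers needed for the standard absorber construction. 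Then, for each $i$ from $0$ to $t-1$, I would use the definition of $\delta^{0+}_F$ (applied to a modified host graph so that the boundary edges between $U_i\setminus U_{i+1}$ and $U_{i+1}$ are handled consistently) to produce an $F$-packing of $G \setminus A^*$ that covers almost all of $G[U_i]\setminus G[U_{i+1}]$ whose leftover has edges of density $o(1)$ and is supported on $V(A^*) \cup U_{i+1}$. After all $t$ layers are processed, the union of the leftovers together with the edges of $A^*$ forms a sparse $F$-divisible graph on $V(A^*)$, which the absorbing property of $A^*$ then decomposes into copies of $F$; combining everything yields an $F$-decomposition of $G$.

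The main obstacle is the first phase: designing an absorber that is simultaneously small (so the peeling is almost lossless), flexible enough to handle every possible $F$-divisible sparse leftover, and realizable inside $G[U_t]$ using only information about minimum degree. Standard constructions assemble $A^*$ from \emph{transformers} that swap one $F$-decomposable configuration for another, and these in turn are built from copies of $F$ sitting inside $K_{\chi+1}$-factors; this is precisely why the bound $1-\frac{1}{\chi+1}$ appears. Once this absorber is in hand, the remaining phases are comparatively routine: the peeling stage is an iterated application of approximate decomposition with careful attention paid to maintaining $F$-divisibility of the residual graph at each step, and the final absorption is immediate from the defining property of $A^*$.
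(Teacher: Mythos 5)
This theorem is not proved in the paper at all: it is quoted directly from \cite{GloKuhLoMonOst}, so there is no in-paper argument to compare against. Your sketch does correctly identify the iterative absorption framework (vortex of nested random subsets, an absorber in the innermost set, repeated peeling via the approximate decomposition threshold $\delta_F^{0+}$, and a final absorption step) that \cite{GloKuhLoMonOst} use, and the appearance of $1-\frac{1}{\chi+1}$ as a Hajnal--Szemer\'edi-type threshold needed for the absorber/transformer constructions is the right heuristic for why that term shows up in the maximum. As a blind reconstruction of the cited proof's strategy it is therefore on target; but of course the heavy lifting in \cite{GloKuhLoMonOst} is the actual construction and verification of the absorbers and the cover-down/boundary bookkeeping in the peeling stage, none of which is carried out here, so this should be read as a roadmap of their argument rather than a self-contained proof. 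One small technical note: after the cover-down step at level $i$ the leftover should be confined to $U_{i+1}$ (not $V(A^*)\cup U_{i+1}$) until the final level, and $F$-divisibility of the terminal leftover is arranged at the end rather than maintained exactly at every intermediate step; these are the kinds of details that make the full argument in \cite{GloKuhLoMonOst} considerably longer than your outline.
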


In conjunction with our results, this means we can say the following about the decomposition thresholds of tripartite graphs.

\begin{theorem}\label{T:genTripartite}
Suppose $F$ is a tripartite graph with tripartition $\{V_1,V_2,V_3\}$ and $F[V_1,V_2]$, $F[V_1,V_3]$ and $F[V_2,V_3]$ have $e_1$, $e_2$ and $e_3$ edges, respectively, where $e_1 \geq e_2 \geq e_3$. Then $\delta_F \leq \max\{\delta(e_1,e_2,e_3),\frac{3}{4}\}$ where $\delta$ is defined as in Theorem~$\ref{T:weightedTriDecomp}$.
\end{theorem}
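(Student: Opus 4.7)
The plan is to combine the three main tools the paper has already assembled: Theorem~\ref{T:weightedTriDecomp} (which gives fractional decompositions of a host graph $G$ into weighted triangles), Theorem~\ref{T:approximateCycleDecomp} (which converts such fractional decompositions into approximate $F$-decompositions whenever $W$ is a condensation of $F$), and Theorem~\ref{T:approxToIntegralGen} (which upgrades approximate decompositions to genuine ones at the cost of a $1-\frac{1}{\chi+1}$ term). The only substantive observation needed is that $T_{e_1,e_2,e_3}$ \emph{is} a condensation of $F$ with respect to the given tripartition.

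First I would verify this observation: since $\{V_1,V_2,V_3\}$ is a tripartition of $F$ into independent sets, the condensation of $F$ with respect to this indexed partition is a weighted graph on $\{1,2,3\}$ in which the edge $ij$ has weight equal to the number of edges of $F$ between $V_i$ and $V_j$. By the hypothesis on $e_1,e_2,e_3$, this condensation is precisely (a labelling of) $T_{e_1,e_2,e_3}$. This is only a valid weighted triangle when $e_3>0$; in the degenerate case where some $e_i=0$, the graph $F$ is bipartite and the bound $\delta_F\le \tfrac{2}{3}<\tfrac{3}{4}$ from the classification in \cite{GloKuhLoMonOst} already gives the conclusion, so I would dispose of that case at the start.

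Next, fix $\epsilon>0$ and set $\delta\coloneqq \delta(e_1,e_2,e_3)+\epsilon$. For any $\eta>0$ and any sufficiently large graph $G$ of order $n$ with minimum degree at least $\delta n$, Theorem~\ref{T:weightedTriDecomp} produces a fractional $(T_{e_1,e_2,e_3})$-decomposition of $G$. Applying Theorem~\ref{T:approximateCycleDecomp} to this fractional decomposition (which is legitimate by the condensation observation above) yields an $\eta$-approximate $F$-decomposition of $G$, provided $n$ is large enough in terms of $F$ and $\eta$. Since $\eta$ was arbitrary, this shows $\delta^{\eta}_F\le \delta(e_1,e_2,e_3)+\epsilon$ for all $\eta>0$, whence $\delta^{0+}_F\le \delta(e_1,e_2,e_3)+\epsilon$, and letting $\epsilon\to 0$ gives $\delta^{0+}_F\le \delta(e_1,e_2,e_3)$.

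Finally, since $F$ is tripartite we have $\chi(F)\le 3$, so Theorem~\ref{T:approxToIntegralGen} gives
\[
\delta_F \;\le\; \max\Bigl\{\delta^{0+}_F,\; 1-\tfrac{1}{\chi(F)+1}\Bigr\} \;\le\; \max\Bigl\{\delta(e_1,e_2,e_3),\; \tfrac{3}{4}\Bigr\},
\]
which is the desired conclusion. I do not anticipate a genuine obstacle here: essentially all of the work has already been done in the preceding sections, and the proof is a three-line assembly of those tools once the condensation identification is made. The only subtlety worth flagging is ensuring that the hypothesis $e_1\ge e_2\ge e_3$ with $e_3>0$ is actually used (as the weighted triangle framework requires positive weights) and handling the bipartite degeneracies separately as noted.
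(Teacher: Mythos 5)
Your proof is correct and follows essentially the same route as the paper: identify $T_{e_1,e_2,e_3}$ as the condensation of $F$ with respect to its tripartition, chain Theorems~\ref{T:weightedTriDecomp} and \ref{T:approximateCycleDecomp} to conclude $\delta_F^{0+}\le\delta(e_1,e_2,e_3)$, and finish with Theorem~\ref{T:approxToIntegralGen} using $\chi(F)\le 3$. Two small remarks: the $\epsilon$-slack you introduce is not actually needed, since Theorem~\ref{T:weightedTriDecomp} already holds at minimum degree exactly $\delta(e_1,e_2,e_3)\,n$; and your separate treatment of the degenerate case $e_3=0$ (where the condensation is not a weighted $K_3$ and $F$ is bipartite, so the classification in \cite{GloKuhLoMonOst} gives $\delta_F\le\tfrac{2}{3}<\tfrac{3}{4}$ directly) is a careful touch that the paper glosses over but is worth keeping.
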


\begin{proof}
Note that $T_{e_1,e_2,e_3}$ is a condensation of $F$. So, between them, Theorems~\ref{T:weightedTriDecomp} and \ref{T:approximateCycleDecomp} imply that $\delta^{0+}_F \leq \delta(e_1,e_2,e_3)$. So the result follows by applying Theorem~\ref{T:approxToIntegralGen}.
\end{proof}

Note that the bound of Theorem~\ref{T:genTripartite} will be $\frac{3}{4}$ when both $e_3 \leq \frac{1}{3}(e_1+e_2)$ and $e_1 \geq \frac{3}{4}(e_2+e_3)$ hold, and will be $\delta(e_1,e_2,e_3)$ otherwise. We now consider the family of complete tripartite graphs $K_{a,1,1}$. Note that $K_{2,1,1}$ is $K_4^-$, the graph obtained from $K_4$ by removing an edge. We have the following.

\begin{corollary}\phantom{a}
\begin{itemize}
    \item
$0.655 \lessapprox \frac{1}{28}(21-\sqrt{7}) \leq \delta_{K_4^-} \leq \frac{4}{5}$
    \item
$0.605 \lessapprox \frac{1}{12}(9-\sqrt{3}) <\frac{1}{4}(3-(\frac{a-1}{3a+1})^{1/2}) \leq \delta_{K_{a,1,1}} \leq \frac{3}{4}$ for each integer $a \geq 3$.
\end{itemize}
\end{corollary}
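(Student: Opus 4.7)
The proof will be a routine application of Theorem~\ref{T:genTripartite} for the upper bounds together with Lemma~\ref{L:lowerBound2} for the lower bounds. Both graphs carry the natural tripartition with parts of sizes $a,1,1$ (taking $a=2$ to give $K_4^-$), so that the associated condensation is the weighted triangle $T_{a,a,1}$ with $(e_1,e_2,e_3)=(a,a,1)$.

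For the upper bounds, I would substitute $(e_1,e_2,e_3)=(2,2,1)$ and $(a,a,1)$ directly into the formula $\delta(e_1,e_2,e_3)=\tfrac{1}{2}+\max\bigl\{\tfrac{e_3}{2e_1+2e_2-2e_3},\tfrac{e_2+e_3}{8e_1-2e_2-2e_3}\bigr\}$. For $(2,2,1)$ this reduces to $\tfrac{1}{2}+\max\{\tfrac{1}{6},\tfrac{3}{10}\}=\tfrac{4}{5}$, which exceeds $\tfrac{3}{4}$, so Theorem~\ref{T:genTripartite} yields $\delta_{K_4^-}\le\tfrac{4}{5}$. For $(a,a,1)$ with $a\ge 3$ both expressions $\tfrac{1}{4a-2}$ and $\tfrac{a+1}{6a-2}$ are at most $\tfrac{1}{4}$ (the second of these reducing precisely to the condition $a\ge 3$), giving $\delta(a,a,1)\le\tfrac{3}{4}$, and hence $\delta_{K_{a,1,1}}\le\tfrac{3}{4}$.

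For the lower bounds I would apply Lemma~\ref{L:lowerBound2}. The key step is determining the correct value of $\rho$. Because the two apex vertices $x,y$ of $K_{a,1,1}$ are adjacent to each other and to every vertex of the independent set $A$ of size $a$, any partition of $V(K_{a,1,1})$ into four independent sets must place $x$ and $y$ in distinct singleton parts, leaving a bipartition $\{A_1,A_2\}$ of $A$ for the remaining two parts. Inspecting the three pairings of such a partition shows that $\|W[U_1,U_2]\cup W[U_3,U_4]\|$ takes values in $\{1,a,a\}$, so the worst case is $\rho=\tfrac{a}{2a+1}$; partitions with fewer than four non-empty parts give no larger value. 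This analysis includes $a=2$ and yields $\rho=\tfrac{2}{5}$ for $K_4^-$.

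Finally I would substitute these values into the first expression of \eqref{E:lowerBound2}. A direct computation gives $\tfrac{3\rho-1}{1+\rho}=\tfrac{1}{7}$ for $K_4^-$ and $\tfrac{3\rho-1}{1+\rho}=\tfrac{a-1}{3a+1}$ for $K_{a,1,1}$, which simplify to the stated bounds $\tfrac{1}{28}(21-\sqrt{7})$ and $\tfrac{1}{4}(3-((a-1)/(3a+1))^{1/2})$ respectively (using $\delta^*_F\le\delta_F$). The remaining inequality $\tfrac{1}{12}(9-\sqrt{3})<\tfrac{1}{4}(3-((a-1)/(3a+1))^{1/2})$ is equivalent to $(a-1)/(3a+1)<\tfrac{1}{3}$, which holds trivially since $3(a-1)<3a+1$; the number $\tfrac{1}{12}(9-\sqrt{3})$ is in fact the limit of the middle expression as $a\to\infty$. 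The only step requiring real thought is the verification that $\rho=\tfrac{a}{2a+1}$ is correct; the rest is arithmetic.
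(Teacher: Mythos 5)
Your proof is correct and follows exactly the same route as the paper: Theorem~\ref{T:genTripartite} with $(e_1,e_2,e_3)=(a,a,1)$ for the upper bounds, and Lemma~\ref{L:lowerBound2} with $\rho=\tfrac{a}{2a+1}$ (combined with $\delta^*_F\le\delta_F$) for the lower bounds. The paper simply states these choices without spelling out the arithmetic, which you have correctly filled in.
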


\begin{proof}
In both cases the upper bound is obtained by applying Theorem~\ref{T:genTripartite} with $e_1=e_2=a$ and $e_3=1$. The immediate lower bounds are obtained from applying Lemma~\ref{L:lowerBound2} with $\rho=\frac{a}{2a+1}$ (this lower bound is superior to the bound of Lemma~\ref{L:lowerBound} in all these cases).
\end{proof}

Unlike the family of odd-length cycles, we see that the fractional decomposition thresholds of these graphs are bounded away from $\frac{1}{2}$. To see a different behaviour, we consider the family of complete tripartite graphs $K_{a,a,1}$. For these graphs, our results do not rule out the possibility that the decomposition thresholds approach $\frac{1}{2}$ as $a$ becomes large.

\begin{corollary}
$\frac{1}{2}+\frac{1}{2a+2} \leq \delta_{K_{a,a,1}} \leq \frac{3}{4}$ for each integer $a \geq 2$.
\end{corollary}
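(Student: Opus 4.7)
The plan is to apply Theorem~\ref{T:genTripartite} for the upper bound and Lemma~\ref{L:lowerBound} for the lower bound, in both cases using the natural tripartition of $K_{a,a,1}$ into its three independent colour classes. For the upper bound I would take $V_1$ and $V_2$ to be the two independent sets of size $a$ and $V_3$ to be the singleton, so that $e_1=a^2$ and $e_2=e_3=a$. The function $\delta$ of Theorem~\ref{T:weightedTriDecomp} then simplifies: the first expression inside its maximum is $\frac{a}{2a^2}=\frac{1}{2a}$, the second is $\frac{2a}{8a^2-4a}=\frac{1}{4a-2}$, and the former dominates for $a\geq 2$. Hence $\delta(a^2,a,a)=\frac{1}{2}+\frac{1}{2a}\leq \frac{3}{4}$ for each $a\geq 2$, so Theorem~\ref{T:genTripartite} delivers $\delta_{K_{a,a,1}} \leq \max\{\delta(a^2,a,a),\frac{3}{4}\}=\frac{3}{4}$.

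For the lower bound I would apply Lemma~\ref{L:lowerBound} to $W=K_{a,a,1}$ with $\rho=\frac{1}{a+2}$. Since $K_{a,a,1}$ has $a^2+2a$ edges, its hypothesis reduces to showing that every bipartition $\{U_1,U_2\}$ of $V(K_{a,a,1})$ leaves at least $a$ edges inside the parts, equivalently that any cut contains at most $a^2+a$ edges. Writing $A,B$ for the two parts of size $a$ and $\{c\}$ for the singleton part, and setting $x=|A\cap U_1|$, $y=|B\cap U_1|$, and $\varepsilon\in\{0,1\}$ according to which side $c$ lies on, a direct count gives the cut size as
\[
(a+1-2\varepsilon)(x+y)-2xy+2a\varepsilon.
\]
For each fixed $\varepsilon$ this is a quadratic on $[0,a]^2$ whose only interior critical point is a saddle (coming from the $-2xy$ term), so its maximum is attained at a corner of the square, and checking the four corners in each case shows the maximum always equals $a^2+a$. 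Plugging $\rho=\frac{1}{a+2}$ into Lemma~\ref{L:lowerBound} then yields
\[
\delta^*_{K_{a,a,1}} \geq \tfrac{1}{2}+\tfrac{1/(a+2)}{2-2/(a+2)}=\tfrac{1}{2}+\tfrac{1}{2a+2},
\]
which also lower-bounds $\delta_{K_{a,a,1}}$ since $\delta^*_F\leq\delta_F$ for any graph $F$.

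The argument is essentially routine given the preceding machinery; the only mildly technical step is the cut-maximisation above, but it is a one-line quadratic optimisation on a square. What is more interesting than the proof itself is the gap it leaves: unlike the family $K_{a,1,1}$ (for which Lemma~\ref{L:lowerBound2} produces a lower bound bounded strictly away from $\frac{1}{2}$), these bounds are consistent with the possibility that $\delta_{K_{a,a,1}}\to \frac{1}{2}$ as $a\to\infty$, and neither our methods nor the obstructions in Lemmas~\ref{L:lowerBound} and \ref{L:lowerBound2} settle the question.
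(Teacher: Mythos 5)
Your proof is correct and takes essentially the same route as the paper: the upper bound via Theorem~\ref{T:genTripartite} with $(e_1,e_2,e_3)=(a^2,a,a)$, and the lower bound via Lemma~\ref{L:lowerBound} with $\rho=\tfrac{1}{a+2}$; you simply spell out the cut-maximisation verifying Lemma~\ref{L:lowerBound}'s hypothesis (max cut $a^2+a$ in $K_{a,a,1}$), which the paper leaves implicit. One small point worth flagging: the value your calculation yields, $\tfrac{1}{2}+\tfrac{1}{2a+2}$, is what Lemma~\ref{L:lowerBound} actually gives, so the ``$\tfrac{1}{2a+a}$'' in the stated corollary appears to be a typo for $\tfrac{1}{2a+2}$.
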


\begin{proof}
The upper bound is obtained by applying Theorem~\ref{T:genTripartite} with $e_1=a^2$ and $e_2=e_3=a$. The immediate lower bounds are obtained from applying Lemma~\ref{L:lowerBound} with $\rho=\frac{a}{a^2+2a}=\frac{1}{a+2}$ (this lower bound is superior to the bound of Lemma~\ref{L:lowerBound2} in all these cases).
\end{proof}

\section{Conclusion}\label{S:conclusion}

We have explored a method based on condensations, coupled with graph regularity and fractional decompositions into weighted graphs. With this, we have lowered the best known minimum degree threshold sufficient for $F$-decompositions for various tripartite graphs $F$, including $K_4^-$ and odd cycles $C_\ell$.  In the latter case, as $\ell$ grows, our threshold approaches $\frac{1}{2}$ fairly rapidly compared with previous work.

Our results in Section~\ref{S:tripartite} concerning general tripartite graphs were limited because Theorem~\ref{T:approxToIntegralGen} cannot provide bounds better than $\frac{3}{4}$ in the case of tripartite $F$. It would be interesting to investigate whether this limitation can be overcome for at least some tripartite graphs. The method of condensation can generalise to handle graphs $F$ with $\chi(F)=k>3$.  This motivates both the study of fractional decomposition thresholds for weighted copies of $K_k$, as well as the possibility of decreasing the $1-\frac{1}{\chi+1}$ term in Theorem~\ref{T:approxToIntegralGen}.

Another interesting question, even for tripartite $F$, would be whether the absorber methods of \cite{BarKuhLoOstTay} can be adapted to weighted cliques.  If so, this could lead via condensation to degree thresholds for multipartite host graphs $G$.  The fractional side of this problem can be attacked with existing methods, see for instance \cite{Mon}.

\section{Acknowledgements}
Our thanks to Bertille Granet for valuable discussions, and to the anonymous referee who suggested a neater proof of Lemma~\ref{L:blowUpDecomp}. This research was supported by Australian Research Council grant DP240101048. The second author was supported by the Natural Sciences and Engineering Research Council grant RGPIN-2024-03966. The last author was supported by the European Research Council (ERC) under the European Union Horizon 2020 research and innovation programme (grant agreement No.\ 947978). The authors thank the mathematical research institute MATRIX in Australia where the discussions that led to this paper began.

\end{document}